\newcommand{\IZ}{\mathbb{Z}}                       
\newcommand{\IR}{\mathbb{R}}   
\newtheorem{stat}{Statement}
\newtheorem{decth}[stat]{Theorem}
\newtheorem{prop}[stat]{Proposition}
\newtheorem{cor}[stat]{Corollary}
\newtheorem{thm}[stat]{Theorem}
\newtheorem{lemma}[stat]{Lemma}
\newtheorem{definition}{Definition}
\theoremstyle{remark}
\newtheorem{remark}{\bf\em Remark}
\renewcommand{\epsilon}{\varepsilon}
\renewcommand{\phi}{\varphi}
\def\diam{\operatorname{diam}}
\newcommand{\IP}{\mathbb{P}}                       
\newcommand{\IE}{\mathbb{E}}                       
\newcommand{\IB}{\mathbb{B}}                       
\newcommand{\I}{{\text{\Large $\mathfrak 1$}}}
\begin{document}

\title[Power law condition for stability of Poisson hail]{Power law condition
for stability \\of Poisson hail}

\author{Sergey Foss}
\thanks{SF was supported by EPSRC grant EP/I017054/1}
\address{Sergey Foss:
School of Mathematical and Computer Sciences,
Heriot-Watt University,
Edinburgh, EH14 4AS, UK and Sobolev Institute of Mathematics and
Novosibirsk State University, Russia}
\email{S.Foss@hw.ac.uk}

\author{Takis Konstantopoulos}
\thanks{TK was supported by Swedish Research Council grant 2013-4688}
\address{Takis Konstantopoulos:
Department of Mathematics, Uppsala University,
P.O.\ Box 480, 751 06 Uppsala, Sweden}
\email{takiskonst@gmail.com}

\author{Thomas Mountford}
\address{Thomas Mountford:
Ecole Polytechnique F\'ed\'erale de Lausanne,
Institut de Math\'ematiques,
Station 8, 1015 Lausanne, Switzerland}
\email{thomas.mountford@epfl.ch}

\begin{abstract}
The Poisson hail model is a space-time
stochastic system introduced by Baccelli and Foss \cite{BF}
whose stability condition is non-obvious owing
to the fact that it is a spatially infinite.
Hailstones arrive at random points of time and
are placed in random positions of space.
Upon arrival, if not prevented by previously accumulated 
stones, a stone starts melting at unit rate. 
When the stone sizes have exponential tails then
stability conditions exist.
In this paper, we look at heavy tailed stone sizes
and prove that the system can be stabilized when the rate
of arrivals is sufficiently small. We also show that
the stability condition is, in a weak sense, optimal.
We use techniques and ideas from greedy lattice animals.

\bigskip
\noindent \textsc{MSC 2010:} 82B44, 82D30, 60K37

\medskip
\noindent \textsc{Keywords:} Poisson hail, stability, workload, greedy lattice
animals
\end{abstract}

\maketitle

\section{Introduction}
The purpose of this article is to loosen conditions for stability in the
``Poisson hail" interacting queueing model introduced by \cite{BF}. In the discrete setting for this model, 
there are countably many jobs (identified by countably many points in space-time).
Job $i$ requires service $\tau_i $ from a subset $B_i \subset \IZ^d$.
As in the preceding paper,  we associate to a job $i$
a (semi-arbitrary) server $x = x(i) \in \IZ^d$ who is in some sense central in
the group $B_i$. We suppose that for each site $w \in \IZ^d$ the jobs $i $ with $x(i)
= w$ arrive according to a Poisson process $N_w$ of rate $\lambda $.  
The jobs $i$ arriving at $w$ will have their subsets $B_i $ and service times
$\tau_i $ distributed as i.i.d.\ vectors, so the arrivals at site $w$ may be
considered as a marked Poisson process $\Phi_w$. 
In other words, $\Phi_w$ is a Poisson process on 
$\IR \times \IR_+ \times 2^{\IZ^d}$. Points in its support
are typically denoted by $(t, \tau, B)$, the $t$'s forming the
aforementioned rate-$\lambda$ Poisson process. 
The pair $(\tau, B)$ is referred to as the mark of the point $t$.
We also assume that  the $\Phi_w$ are obtained as follows:
Let, for each $w$, $\widetilde \Phi_w$ be an independent copy of 
$\Phi_0$. Then let $\Phi_w$ contain all points of the form 
$(t, \tau, B+w)$ where $(t,\tau, B)$ is a point of $\widetilde \Phi_w$.
Thus, the arrival process (including marks) is translation invariant.
Physically, we can think of the system as a model of hailstones of 
cylindrical shape
$B \times [0,\tau] \subset \IZ^{d+1}$, where $\tau$ is the height of the stone
and $B$ its base. When a hailstone appears at some point of time at which
all sites $w \in B$ are free, it starts melting at rate $1$.
If there is at least one $w \in B$ occupied by a previously arrived
stone, then the current stone will not start melting before 
all sites in $w$ become free; at the first moment of time this happens,
the hailstone starts melting at rate $1$. (Only the ground, $\IZ^d$ is hot
and heat is not transmitted upwards!)
At each time $t$, we let $W(t,x)$ be the total work required
for $x$ to be free of hailstones provided no stones arrive after $t$.
In queueing terms, $W(t,x)$ is a workload. In hailstone terms, 
$W(t,x)$ is the sum of the heights of all hailstones which contain $x$ in
their base and have not been melted yet. Since the superposition
of $N_w$, $w \in \IZ^d$,  has infinite rate, it follows that within any
time interval of positive length there are infinitely many stones arriving. 
Thus $W(t,\cdot)$ will change infinitely many times in any right neighborhood of $t$.
However, typically, 
for fixed $x \in \IZ^d$, and any $\epsilon > 0$, $W(t,x)$ depends only
on $W(t-\epsilon, y)$, for $y$ ranging in a finite 
(but random) number of sites. This is
due to the fact that the only have to look at those $\Phi_w$ with points
$(s,\tau, B)$ such that $t-\epsilon \le s \le t$ and $x \in B$.

Fix $x \in \IZ^d$ and suppose there is $w \in \IZ^d$ 
such that $(t,\tau, B)$ is a point of $\Phi_w$.
Then 
\begin{equation}
\label{evol}
W(t+,x) = 
\begin{cases}
\max_{y \in B} W(t-,y) + \tau, & x \in B
\\
W(t-, x), & x \not \in B.
\end{cases}
\end{equation}
By convention, we shall assume that $t\mapsto W(t,x)$ is right-continuous: 
$W(t,x)$ $= W(t+,x)$.
On the other hand, if there is no $w$ such that $(t,\tau,B)$ is a point of 
$\Phi_w$ 
with $x \in B$ then $W(s,t)$, $s \ge t$, decreases linearly for a interval
of positive length until either it reaches zero or there is job arriving
at some $s > t$ at some site $w$ whose base contains $x$.
We have thus completely specified the dynamics of the system.
The system considered here differs from that of \cite{KB} in that
the latter (i) considers only finitely many sites ($\IZ^d$ is replaced by
a finite set) but (ii) works for stationary and ergodic arrival processes. 

The system is said to be stable if (starting from full vacancy at time $0$) the
distribution of $W(t,x)$ is tight as $t$ varies for fixed $x$. 
The central question to be addressed is when is the system stable
(for $\lambda $ sufficiently small).
More precisely, for which laws on $(\tau,B)$ for jobs arriving at the origin is
it the case that there exists $\lambda_0 \in (0, \infty ) $ so that 
the system is stable for all arrival rates $\lambda < \lambda_0$. 
To avoid trivialities, we assume that $B$ is a finite set, a.s. .
The founding article
\cite{BF} showed that the system was indeed stable provided that there is
$c \in (0, \infty ) $ so that
\[
\IE[e^{c (\tau + (\diam B)^d)}  ] <  \infty , 
\]
where $\diam B$ is the diameter of set $B$, i.e., the maximum
of $|x-y|_\infty$ over all $x, y \in B$, and where $|x|_\infty:=
\max_{1\le  i \le d} |x_i|$. The proof in \cite{BF} is based on
a comparison with an auxiliary branching process with weights
requiring the condition stated in the last display.

Our purpose in this paper is to slacken
this condition to the existence of the $(d+1+ \epsilon )$-th moment for $\tau +
\diam B$.
We then (easily) show that this condition is (in a certain weak sense) almost
optimal. The key idea is to use ideas on laws of large numbers for lattice
animals.
 This was first proved in \cite{CGGK}, though for this paper we take as
reference the article by James Martin \cite{JM}. 
In analogy to \cite{KB}, one could also ask whether stability is possible
for more general arrival processes. This question, however, is outside the scope
of our paper as our method explicitly uses the Poissonian assumptions.

Our principal result is
\begin{thm} \label{thm1}
Suppose there exists $\epsilon >0$ such that  $\tau$ and 
$\diam B$ have 
finite moments of order $(d+1+\varepsilon)$:
\[
\IE \tau^{d+1+\epsilon} + \IE (\diam B)^{d+1+\epsilon} <\infty.
\]
Then there exists $\lambda_0 > 0 $ so that for job arrival rate $ \lambda <
\lambda _0 $ the system is stable.
\end{thm}

That this result is (in a weak sense) the best possible is shown by
\begin{thm} \label{thm2}
For any $ d+1 > \epsilon > 0$, we can find a (spatially homogeneous) job arrival
process so that  
\[
\IE \tau^{d+1-\epsilon} + \IE (\diam B)^{d+1-\epsilon} <\infty
\]
and the system is unstable.
\end{thm}

\begin{remark}
The condition of Theorem \ref{thm1} is equivalent to the 
following:
there exists $\epsilon >0$ and $C>0$ such that, for any $x\ge 0$,
\begin{equation}\label{maincondition}
\IP(\tau + \diam B > x) \le
\frac{C}{x^{d+1+ \epsilon }}.
\end{equation}
Similarly, the condition of Theorem \ref{thm2} is equivalent to 
the same thing with $-\epsilon$ in place of $\epsilon$. 
\end{remark}

Given stability, it is easy to see that starting from complete vacancy (that is,
no workload at any site), the system converges in distribution to an explicitly
describable
equilibrium. It is natural to ask whether the system possesses other, 
not necessarily spatially homogeneous, equilibria.
While not definitively answering this we show
\begin{thm} \label{Thm3}
Under the conditions of Theorem \ref{thm1}, there exists $\lambda_0 > 0 $ so
that for arrival rate $0 < \lambda < \lambda_0 $, the only equilibrium for the
system that is
spatially translation invariant is the limit measure obtained by starting from
zero workload.
\end{thm}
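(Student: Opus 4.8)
The plan is to show that any spatially translation-invariant equilibrium $\mu$ is stochastically dominated by the minimal equilibrium $\mu_0$ (the one obtained by starting from zero workload), and then, by exploiting translation invariance together with the finiteness of the mean workload under $\mu_0$, conclude that $\mu = \mu_0$. Here $\mu_0$ is the limit measure from Theorem~\ref{thm1}: under the hypothesis $(*)$ and $\lambda < \lambda_0$, starting from identically zero workload the configuration $W(t,\cdot)$ converges in distribution to $\mu_0$, and one shows along the way (this is essentially part of the stability proof via greedy lattice animals) that the one-dimensional marginal of $\mu_0$ has finite mean, $E_{\mu_0}[W(0,0)] =: m < \infty$.

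First I would set up a coupling. Run the dynamics \eqref{evol} from two initial conditions on the same driving Poisson marks $(\Phi_w)_{w\in\IZ^d}$: one started at time $-T$ from all-zero workload, call it $W^0_{-T}(t,\cdot)$, and one started at time $-T$ from a configuration $V_{-T}$ drawn from the putative stationary law $\mu$ (using the stationarity of $\mu$ under the time-shift, so that at any later time its law is still $\mu$). The evolution rule is monotone: if $W(t-,\cdot) \le W'(t-,\cdot)$ pointwise then the rule \eqref{evol} preserves this, since $\max_{y\in B}$ is monotone and the free (melting) dynamics is monotone as well. Since $0 \le V_{-T}$, monotonicity gives $W^0_{-T}(0,\cdot) \le V_0(\cdot)$ a.s., where $V_0 \sim \mu$. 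Letting $T\to\infty$, the left side converges in distribution to $\mu_0$, so $\mu_0 \preceq \mu$ stochastically (coordinatewise, hence in the usual partial order on $\IR_+^{\IZ^d}$).

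Next I would upgrade the stochastic domination to equality. The idea is a mass-transport / averaging argument: because $\mu$ is translation invariant, $E_\mu[W(0,0)]$ is well-defined in $[0,\infty]$ and equals the spatial density of total workload. If one can show $E_\mu[W(0,0)] \le m = E_{\mu_0}[W(0,0)]$ — equivalently that the long-run input rate of work per site, $\lambda\, E[\tau\,|B|]/\ldots$, must be balanced by output rate $1$ per busy site in \emph{any} invariant regime, which pins down the mean — then combined with $\mu_0 \preceq \mu$ and the fact that stochastic domination between laws with equal (finite) means forces the laws to coincide, we get $\mu = \mu_0$. Concretely: $\mu_0\preceq\mu$ yields a coupling with $W_0 \le W$ a.s. and $W_0\sim\mu_0$, $W\sim\mu$; if $E[W_0(0)] = E[W(0)] < \infty$ then $E[W(0)-W_0(0)] = 0$ with $W(0)-W_0(0)\ge 0$, so $W(0)=W_0(0)$ a.s. at the origin, and by translation invariance at every site, giving $\mu=\mu_0$.

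The main obstacle is the equality of means, i.e.\ proving $E_\mu[W(0,0)] < \infty$ and that it cannot exceed $m$. Finiteness is not automatic for an arbitrary invariant $\mu$; one route is to use the rate-conservation principle (Neveu's exchange formula / a spatial ergodic argument) for the stationary marked point process of arrivals, showing that in any translation-invariant equilibrium the time-and-space average of $W$ is determined by the first moments of $(\tau,|B|)$ and $\lambda$ alone — these are finite under $(*)$ since $(*)$ implies $E[\tau]<\infty$ and $E[\diam B]<\infty$, hence $E[|B|]<\infty$ as $|B|\le(2\diam B+1)^d$ is \emph{not} automatically integrable — so one must instead control $|B|$ directly, which is where one needs $(*)$ to give $E[(\diam B)^{d+1+\epsilon}]<\infty$ and thus $E[|B|]<\infty$. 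Assuming this rate-conservation identity holds for both $\mu$ and $\mu_0$ and yields the same value $m$, the argument closes. I would therefore organize the proof as: (1) monotonicity of the dynamics and the coupling giving $\mu_0\preceq\mu$; (2) a rate-conservation computation identifying $E_\mu[W(0,0)]$ for every translation-invariant equilibrium, yielding the common finite value $m$; (3) the elementary domination-plus-equal-means argument to conclude $\mu=\mu_0$.
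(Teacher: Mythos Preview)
Your step (1), the monotone coupling giving $\mu_0 \preceq \mu$, is correct. The genuine gap is step (2). A rate-conservation identity cannot pin down $E_\mu[W(0,0)]$ in this model, because the increment of $W(\cdot,0)$ at an arrival $(t,\tau,B)$ with $0\in B$ is $\max_{y\in B}W(t-,y)+\tau - W(t-,0)$, not simply $\tau$. This increment depends on the spatial profile of $W(t-,\cdot)$ and hence on the equilibrium itself, so the expected input rate of work at a site is \emph{not} a functional of $(\lambda,\tau,B)$ alone; balancing it against the output rate $P_\mu(W(0,0)>0)$ yields an equation that still contains $\mu$, not a number common to all equilibria. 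There is also a second difficulty you gloss over: the stability argument only delivers tightness via a tail bound whose exponent is the $\epsilon$ of condition $(*)$ (this is what Proposition~\ref{propdef} gives), and since that $\epsilon$ may well be less than $1$, finiteness of $E_{\mu_0}[W(0,0)]$ is not established. Without finite first moments the ``domination plus equal means implies equality'' step cannot even be posed.

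The paper's route is entirely different and avoids means. After an ergodic decomposition one may take the invariant $\mu$ ergodic. The key step (Proposition~\ref{propog}) is a \emph{propagation} property coming from irreducibility of the base distribution: a workload of size $V$ at one site forces, with uniformly positive probability $c_1$, workload at least $V/4$ at every site in a cube of side $c_1V$ for a time interval of length of order $V$. If $W(\cdot,0)$ under $\mu$ had no spatial growth control, this would contradict the spatial ergodic theorem applied to the density of sites $x$ for which $\frac{1}{t}\int_0^t I_{W(s,x)\le M}\,ds \ge 1-\epsilon$ eventually. One thereby obtains that, with $\mu$-probability close to $1$, $W(x,0)\le |x|/K$ for all large $|x|$. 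Fed into the path representation together with the score estimate of Proposition~\ref{propdef} (paths of temporal length $m$ have score at most $-m/2$), this shows that the initial condition at time $-n$ is forgotten at the origin as $n\to\infty$, whence $\mu=\mu_0$.
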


We now assemble some observations and techniques from earlier papers,
\cite{KB,BF}. Start the system at time $-n$ from full vacancy and consider how
the workload $W^n(t,x)$ at time $t \ge -n$ and site $x \in \IZ^d$ is obtained.
\begin{definition}
\label{defgamma}
Let $\Gamma^n(x,t) $ be the set of locally constant cadlag (=
piecewise constant, continuous on the right with left limits at
every point)  paths $
\gamma : [u,t] \rightarrow \IZ^d $ for some $-n \le u \le t $ such that \\
(i) $ \gamma (t)  =  x$,\\
(ii) if $ \gamma (s) \ne \gamma (s-)$, then a job arrived at time $s$ requiring
service from both servers $ \gamma (s) $ and $ \gamma (s-)$.
\end{definition}
Associate to such a $\gamma \in \Gamma^n(x,t)$ 
the score 
\[
V( \gamma) = \sum_i  \tau_i   -  (t-u),
\]
where the sum is over jobs $(\tau_i, B_i )$ which are arrive at time $s_i $ with
$\gamma(s_i) \in B_i $.
Based on the way that the workload evolves (see discussion around 
equation \eqref{evol}) we obtain that $W^n(t,x) = \sup_{\gamma
\in \Gamma^n(x,t)} V(\gamma)$.
See Figure \ref{graphicalrep}.

There are three monotonicity properties that the system possesses and which
we take into account when analyzing its stability. We start from full vacancy 
at time $-n$ and consider $W^n(t,x)$ for some $t \ge -n$. Then $W^n(t,x)$
will increase if we (i) delay all arrivals between $-n$ and $t$, 
or (ii) increase
the heights of the stones, or (iii) enlarge their bases.

Thanks to the monotonicity, it was deduced in \cite{BF} that it is  enough, 
for the
results sought, to consider the case where the sets $B$ for the team of servers
required for a job $i$ with $x(i) = 0$ is a cube centred at the origin and we
write (for a job arriving at server $x$ in time interval $(m-1,m]$)
$R^{x,m}_i$ for the value so that $B  = x +  [-R^{x,m}_i,R^{x,m}_i]^d$. We
will work with time doubly infinite, notwithstanding the fact that we consider
the process on $[-n, \infty )$.

The first step is the discretization of the Poisson processes. We consider for
$m \in \IZ $ and $x \in \IZ^d$ the random variables
\[
R_{x,m}  =  \sum _{i } R^{x,m}_i, \quad T_{x,m}  =  \sum _{i} \tau^{x,m}_i ,
\]

the sum taken over all jobs $i$ arriving on the time interval
$(m-1,m]$ at site $x$. 
Since the summands in $R_{x,m}$ are i.i.d.\ and their number is Poisson
(and, therefore, light-tailed), it is clear that
$R_{x,m}$ has finite $\alpha$ moment if each summand has finite
$\alpha$ moment. Similarly for $T_{x,m}$.
\begin{lemma} \label{basic}
If for $ \alpha > 0$, $\IE (\diam B)^{\alpha} <\infty$
(resp., $\IE \tau_i^{\alpha} <\infty$),
then
$\IE R_{x,m}^{\alpha} <\infty$
(resp. $\IE T_{x,m}^{\alpha}<\infty$).
\end{lemma}
We will deal with the discretized system where at server $w$ at time $n$ a job
requiring service time $T_{w,n} $ from each server in the cube
$[w-R_{w,n},w+R_{w,n} ] ^d $.
By the monotonicity (see \cite{BF} for detail), this discretization is 
effective in that if we can
show stability for the discretized system of jobs then we will have shown
stability for the original system: the workload at time $m$ for this system will
dominate that arising from the nondscretized model. It is also as well to note
that we have not given up too much here. In principle, if we have multiple
$w$ jobs arriving during interval $(m-1,m]$ then we could,
again in principle, lose if
one job required a long service but only from $w$ while a second job required a
very short service from a large cube of servers centred at $w$. However this
will be rare for small $\lambda $, where our analysis is most relevant.

\section{(Very) greedy lattice animals (GLA)}
\label{VGLA}

As noted, we wish to exploit the celebrated results (see \cite{CGGK}) on greedy
lattice animal systems. Recall that a lattice animal of $\IZ^r $ is simply a
connected subset (when $\IZ^r $ is considered as a graph with the standard
edge set). We are given a collection of i.i.d.\ positive random variables
$\{X(x)\}_{x \in \IZ^r} $. We suppose the existence of $\epsilon > 0$ 
so that $\IE X(0)^{r+1+\epsilon}<\infty$ or, equivalently, the existence
\footnote{The possibility that $X(0)$ have heavy tail justifies
the title of this section, i.e., that animals may be very greedy.}
of $\epsilon >0$ and $C < \infty $ so that for all $t >1$, 
\begin{equation}
\label{suppose}
\IP(X(0) > t ) \le  C/t^{r+1+ \epsilon}.
\end{equation} 
(The $1$ in the power is unnecessary but it is
in this case that we will use our results.) 

We will then parametrize our system
by taking i.i.d.\ random variables $X^\lambda (x) $ to be equal to $X(x)$ with
probability $\lambda $ and otherwise $0$.
For $\zeta \subset \IZ^r $, its $X^\lambda$ value 
(or score) is simply 
\begin{equation}
\label{value}
X^\lambda(\zeta):=\sum_{x \in \zeta} X^\lambda (x).
\end{equation}
The size $|\zeta|$ of the lattice animal is its cardinality.
Note that $X^\lambda(\zeta) \to 0$, as $\lambda \to 0$, in probability,
for any lattice animal of finite size.
We fix positive integer $k$ and $c_1 > 0$ and consider the event 
\[
A_k:=\big\{\exists \text{ lattice animal $\zeta$ containing $0$},\, 
|\zeta|=2^k,\,  X^\lambda(\zeta) \ge c_1 2^k\big\}.
\]
We wish to prove the following upper bound
on the probability of $A_k$, a result which may be of independent interest.
We note that $\IP(A_k)$ depends both on $\lambda$ and $c_1$.
\begin{prop} \label{propGLA} 
Given any $c_1 > 0$, 
there exists a $\lambda_0 >0$ and a
function $C:[0, \lambda _0 ) \to (0,\infty)$ so that 
$C( \lambda ) \rightarrow 0 $ as $\lambda \rightarrow 0$ and so that,
for $\lambda < \lambda_0$ and for all positive integers $k$,
\[
\IP(A_k) \le \frac{C(\lambda)}{2^{k(1+\epsilon)}}.
\]
\end{prop}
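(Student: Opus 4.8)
\noindent\emph{Proof plan.}
I would reduce the statement to a large-deviation estimate for a greedy lattice animal and then separate the weights by their size. A lattice animal $\zeta$ with $0\in\zeta$ and $|\zeta|=2^{k}$ is connected, hence contained in the box $Q_{k}:=[-2^{k},2^{k}]^{r}$, which has at most $3^{r}2^{kr}$ sites. Fix a small constant $\delta>0$ and a large constant $M$, depending only on $c_{1},r,\epsilon$, and split every weight as
\[
X^{\lambda}(x)=X^{\lambda}(x)\mathbf{1}\{X^{\lambda}(x)\le M\}+X^{\lambda}(x)\mathbf{1}\{M<X^{\lambda}(x)\le\delta 2^{k}\}+X^{\lambda}(x)\mathbf{1}\{X^{\lambda}(x)>\delta 2^{k}\}.
\]
On $A_{k}$ at least one of the three resulting partial sums over the offending animal is $\ge c_{1}2^{k}/3$, so it is enough to bound the three events $A_{k}^{\mathrm{low}},A_{k}^{\mathrm{mid}},A_{k}^{\mathrm{hi}}$ --- that such an animal exists with, respectively, its low, middle or high partial sum $\ge c_{1}2^{k}/3$ --- each by $C(\lambda)2^{-k(1+\epsilon)}$ with $C(\lambda)\to0$ as $\lambda\to0$.

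\smallskip
\noindent\emph{The two easy regimes.}
On $A_{k}^{\mathrm{hi}}$ some site of $Q_{k}$ carries a weight exceeding $\delta 2^{k}$, so by a union bound over $Q_{k}$ together with \eqref{suppose},
\[
P(A_{k}^{\mathrm{hi}})\ \le\ 3^{r}2^{kr}\cdot\lambda\,\frac{C}{(\delta 2^{k})^{r+1+\epsilon}}\ =\ \frac{3^{r}C}{\delta^{\,r+1+\epsilon}}\;\lambda\;2^{-k(1+\epsilon)},
\]
which is already of the required form, with coefficient linear in $\lambda$; this is the dominant term and the source of the power law in the statement. For $A_{k}^{\mathrm{low}}$ the relevant partial sum, on a \emph{fixed} animal of size $2^{k}$, is a sum of i.i.d.\ variables bounded by $M$ with common mean $\lambda E[X(0)]$, which tends to $0$ as $\lambda\to0$ because $X(0)$ is integrable by \eqref{suppose}. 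A Chernoff bound, with its parameter allowed to grow slowly as $\lambda\to0$, then gives a per-animal probability at most $e^{-\theta(\lambda)2^{k}}$ with $\theta(\lambda)\to\infty$; since there are at most $a^{2^{k}}$ lattice animals of size $2^{k}$ containing $0$ (a classical bound, $a=a(r)<\infty$), a union bound yields $P(A_{k}^{\mathrm{low}})\le e^{-\theta'(\lambda)2^{k}}$ with $\theta'(\lambda)=\theta(\lambda)-\log a\to\infty$, and since $\sup_{k}2^{k(1+\epsilon)}e^{-\theta'2^{k}}\to0$ as $\theta'\to\infty$ this too contributes a term of the required form, with vanishing coefficient.

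\smallskip
\noindent\emph{The intermediate regime --- where the difficulty lies.}
The hard part is $A_{k}^{\mathrm{mid}}$. A direct union bound over the $a^{2^{k}}$ animals is hopeless here, because for a fixed animal the probability that its middle partial sum reaches $c_{1}2^{k}/3$ is only polynomially small in $2^{k}$ --- a ``one big jump'' estimate, the truncation level $\delta 2^{k}$ being comparable to the target --- and that does not survive multiplication by $a^{2^{k}}$; nor can one escape by moving the truncation, since letting it grow destroys the Chernoff estimate while keeping it fixed destroys the power-law decay. The mechanism one must use instead is that sites carrying an intermediate weight are so sparse that no connected set of $2^{k}$ sites can accumulate $\Omega(2^{k})$ worth of them except with probability $O(2^{-k(1+\epsilon)})$ --- precisely the content of the greedy-lattice-animal large-deviation bounds of Cox--Gandolfi--Griffin--Kesten \cite{CGGK} and Martin \cite{JM} under a finite moment of order $r+1+\epsilon$. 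I would apply these to the family $X^{\lambda}$: first, since $X^{\lambda}(0)\to0$ in probability as $\lambda\to0$, the linear-growth constant $N(\lambda):=\lim_{n}n^{-1}\max\{X^{\lambda}(\zeta):0\in\zeta,\ |\zeta|=n\}$ satisfies $N(\lambda)\to0$, so $c_{1}$ is strictly supercritical once $\lambda<\lambda_{0}$; and second, the only non-exponential part of the associated deviation bound is the ``one big jump'' term $\asymp 2^{kr}P(X^{\lambda}(0)>c\,2^{k})\asymp\lambda\,2^{-k(1+\epsilon)}$, already accounted for by $A_{k}^{\mathrm{hi}}$, the remainder being exponentially small in $2^{k}$. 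The real work --- and the crux of the proof --- is to carry out the renormalization argument of \cite{CGGK,JM} while tracking the dependence of every constant on $\lambda$, in particular checking that the exponential rate stays bounded below and the ``one big jump'' coefficient stays proportional to $\lambda$, uniformly for $\lambda<\lambda_{0}$: the cited estimates are stated for a fixed weight law, whereas we need the degenerate limit $\lambda\to0$, and it is this bookkeeping that dictates the choice of the constants $M$ and $\delta$. Summing the three bounds gives $P(A_{k})\le C(\lambda)2^{-k(1+\epsilon)}$ for $\lambda<\lambda_{0}$ with $C(\lambda)\to0$, as required.
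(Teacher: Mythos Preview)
Your overall architecture---stratify the weights by size, dispatch the extreme layers by elementary means, and invoke the CGGK/Martin covering machinery for the rest---is exactly the paper's strategy. Your ``high'' regime coincides with the paper's $A_{k,d}$, and your ``low'' regime is a legitimate simplification: by truncating at a \emph{fixed} $M$ (rather than the paper's $k$-dependent $2^{kq}/k^2$) you obtain genuinely bounded summands and can run a clean Chernoff--plus--animal-count argument, letting the tilt grow with $1/\lambda$. That part is fine.

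Where your proposal stops short is the intermediate regime, which you correctly identify as the crux but do not carry out. Two remarks. First, the references \cite{CGGK,JM} give almost-sure linear growth and moment conditions, not a ready-made tail bound of the shape $C(\lambda)2^{-k(1+\epsilon)}$ with $C(\lambda)\to 0$; your description ``one big jump term $\asymp\lambda 2^{-k(1+\epsilon)}$ plus exponentially small remainder'' is morally what one obtains, but only \emph{after} reworking the covering argument with $\lambda$ explicit---there is no statement to cite. Second, the paper does precisely this reworking, and to make it go through it further splits your ``mid'' range into three sub-layers with $k$-dependent thresholds $2^{kq}/k^2$ and $2^{kv}$ (with $q=r/(r+1+\epsilon)<v<1$): the top two sub-layers ($A_{k,b}$, $A_{k,c}$) reduce to binomial counting of sites in $[-2^k,2^k]^r$ carrying a weight above the relevant threshold, while the bottom sub-layer ($A_{k,a}$) is handled by the Martin $l$-ball covering, taking $l=l(i)\approx\lambda^{-1/2r}2^{i/q}$ at dyadic scales $2^i\le 2^{kq}/k^2$ and summing over $i$. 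Your constant cutoff $M$ pushes more of the work into the covering step, but the mechanism is the same; what is missing from your write-up is the execution of that step.
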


\begin{remark}\label{remarkremove}
(i) We can use the above to bound the probability that there is a lattice 
animal of size $u \ge 2^k$ containing the origin whose value is 
$\ge c_1 u$, when $\lambda $ is small,
by considering  $\bigcup_{\ell \ge k} A^{c_1/2}_\ell$ whose probability,
by the above, is less than 
$C(\lambda) 2^{-k(1+\epsilon)} (1-2^{-(1+ \epsilon)})^{-1}$.
\\
(ii)
The above formalism will certainly apply to our situation with random variables
$R_{x,m} + T_{x,m}$ at each site $x \in \IZ^r$.
Indeed, if $X(x)$ denotes the random variable at site $x$ for
rate $\lambda = 1$ conditioned on there being at least one arrival, 
then it is easy to see that with rate $\lambda <1$, 
$R_{x,m} + T_{x,m}$ is stochastically less than $X^{1-e^{-\lambda}}(x)$.
\end{remark}

Some notation used in the proof and elsewhere. 
If $x=(x_1,\ldots,x_r) \in \IR^r$ then
$|x|_\infty := \max_{1 \le i \le r} |x_r|$.
The $L^\infty$ ball $\IB(x,\rho)$ centred at $x$ is the
set 
\[
\IB(x,\rho)= \{y \in \IR^r:\, |y-x|_\infty \le \rho\}.
\]
We also let $|x|_1 := \sum_{i=1}^r |x_i|$.
We use $\I_{A}$ for the indicator of $A$.

\begin{proof}[Proof of Proposition \ref{propGLA}]
We split the value $X^\lambda(\zeta)$, see equation \eqref{value}, 
into three parts:
\begin{equation}
\label{3parts}
X^\lambda(\zeta)  
=  X^\lambda_a(\zeta) + X^\lambda_b(\zeta) + X^\lambda_c(\zeta),
\end{equation}
where
\begin{align*}
X^\lambda_a(\zeta)
&:= \sum_{x\in \zeta} X^\lambda(x)\, \I_{X^\lambda(x) \le 2^{qk}/k^2}
\\
X^\lambda_b(\zeta)
&:= \sum_{x\in \zeta} 
X^\lambda(x)\, \I_{2^{qk}/k^2< X^\lambda(x) \le 2^{vk}}
\\
X^\lambda_c(\zeta)
&:= \sum_{x\in \zeta} X^\lambda(x)\, \I_{X^\lambda(x) > 2^{vk}}.
\end{align*}
The constants $q$ and $v$ appearing in the splitting are
chosen as
\[
q:=\frac{r}{r+ 1 + \epsilon} < v < 1.
\]
Define next four events:
\begin{align*}
A_{k,a} &:=\big\{\exists \text{ lattice animal $\zeta$ containing $0$},\, 
|\zeta|=2^k,\,  X^\lambda_a(\zeta) \ge c_1 2^k/10\big\}
\\
A_{k,b} &:=\big\{\exists \text{ lattice animal $\zeta$ containing $0$},\,
X^\lambda_b(\zeta) \ge c_1 2^k/10\big\}
\\
A_{k,c} &:=\big\{
N^\lambda_c(B_k) \ge m\big\}
\\
A_{k,d} &:= A_k \setminus (A_{k,a} \cup A_{k,b} \cup A_{k,c}),
\end{align*}
where $m$ is a positive integer satisfying
\begin{equation}
\label{mchoice}
m(v(r+1+\epsilon)-r) > 1+\epsilon,
\end{equation}
where $B_k:=[-2^k, 2^k]^r=\IB(0,2^k)$,
and where $N^\lambda_c(B_k)$ is the integer-valued random variable
\[
N^\lambda_c(B_k) := \sum_{x \in B_k} \I_{X^\lambda(x) > 2^{vk}}.
\]
Note that if a lattice animal $\zeta$ of size $|\zeta|=2^k$ contains $0$, then
$\zeta \subset B_k$.

We obtain an upper bound for $\IP(A_k)$ via
\[
\IP(A_k) \le \IP(A_{k,a}) + \IP(A_{k,b}) + \IP(A_{k,c}) + \IP(A_{k,d}).
\]
\noindent
{\bf Bound for $\IP(A_{k,d})$:}
Since $A_k$ occurs, there is a lattice animal $\zeta$ of size $2^k$
containing the origin and having value $X^\lambda(\zeta)\ge c_1 2^k$.
Since $A_{k,a}$ does not occur, we have 
$X^\lambda_a(\zeta) \le c_1 2^k/10$.
Since $A_{k,b}$ does not occur, we have
$X^\lambda_b(\zeta) \le c_1 2^k/10$ and  so
$X^\lambda_b(\zeta) \le c_1 2^k/10$.
Therefore, from \eqref{3parts},
\[
X^\lambda_c(\zeta) \ge \frac{c_1 2^k}{2}.
\]
But this, together with the fact that $A_{k,c}$ does not occur, implies
that there is $x \in B_k$ such that $X^\lambda_x \ge 2^k c_1/2m$.
Hence
\[
\IP(A_{k,d}) \le \sum_{x \in B_k} \IP(X^\lambda(x) \ge 2^k c_1/2m)
\le K_m \lambda/2^{(1+\epsilon)k},
\]
for some constant $K_m$.

\noindent
{\bf Bound for $\IP(A_{k,c})$:} The event $A_{k,c}$ is the event that
the sum of at most $(2\times2^k+1)^r$ Bernoulli random variables, 
each taking value $1$ with probability 
at most $p_k=\lambda C 2^{-kv(r+1+\epsilon)}$, 
exceeds $m$.
To bound this probability we observe that if $S_n(p)$ is
the sum of $n$ i.i.d.\ Bernoulli$(p)$ random variables then
$\IP(S_n(p) \ge m)$ is upper bounded by the probability that there
is a set $A \subset \{1,\ldots,n\}$ of size $m$ such that
all Bernoulli random variables are equal to $1$ on $A$, so
\begin{equation}\label{Bern}
\IP(S_n(p) \ge m) \le \binom{n}{m} p^m \le \frac{n^m p^m}{m!}.
\end{equation}

\[
\IP(A_{k,c}) \le \big( (2^{k+1} +1)^{r} p_k \big)^m  
\le  \frac{K_m' \lambda}{2^{(1+\epsilon)k}},
\]
for some constant $K_m'$ and thanks to the choice \eqref{mchoice} for $m$.

\noindent
{\bf Bound for $\IP(A_{k,b})$:} 
We have
\begin{align*}
\IP(A_{k,b}) &= \IP\left(\exists \text{ lattice animal } \zeta,
\, \sum_{x \in \zeta \cap B_k} X^\lambda(x) \I_{2^{qk}/k^2 
< X^\lambda(x) \le 2^{vk}} \ge \frac{c_1 2^k}{10}\right)
\\
&\le \IP\left( 2^{vk} \sum_{x \in B_k} \I_{2^{qk}/k^2 
< X^\lambda(x) \le 2^{vk}} \ge \frac{c_1 2^k}{10}\right)
\\
&\le \IP\left( \sum_{x \in B_k} \I_{2^{qk}/k^2 < X^\lambda(x)}
\ge \frac{c_1 2^{(1-v)k}}{10}\right)
\end{align*}
The sum in the probability is the sum of $n_k=(2^{k+1}+1)^r$
Bernoulli random variables, each with probability being 1 being
at most $p_k:= \lambda C (2^{qk}/k^2)^{-(r+1+\epsilon)}=
\lambda C  k^{2(r+1+\epsilon)}/2^{rk}$.
We can apply now inequality \eqref{Bern} with $n=n_k$, $p=p_k$ and
$m=c_12^{k(1-v)}/10$, and the Stirling formula for $m!\sim \sqrt{2\pi m}e^{m (\log m -1)}$, 
to obtain that the required probability $\IP(A_{k,c})$ is not bigger than
$$
\lambda e^{-m (\log m - 2(r+1+\varepsilon ) \log k) (1+o(1))} 
= \lambda o \left(2^{-(1+\varepsilon )k}\right),
$$
as $k\to \infty$. Therefore, 
\[
\IP(A_{k,c}) \le  \frac{K'' \lambda}{2^{(1+\epsilon)k}},
\]
for some constant $K''$.

\noindent
{\bf Bound for $\IP(A_{k,a})$:}  
We repeat the argument given in \cite{JM} (or \cite{CGGK}).  
\begin{lemma}[Lemma 1 in \cite{CGGK}, Lemma 2.1 in \cite{JM}]
\label{combinlemma}
For any lattice animal $\zeta $ of size $n$ containing the origin and
any $1 \le \ell \le n$ we can find a sequence $0 = u_0, u_1, \ldots, u_h$ 
of points in $\IZ^r$, $h$ being the integer part of $2n/\ell$,
and $|u_i-u_{i-1}|_{\infty}\le1$ for all $1 \le i \le h$, so that 
\[
\zeta  \subset  \bigcup _{i=0}^ h \IB(\ell u_i, 2\ell).
\] 
\end{lemma}
\begin{proof}

For $y=(y^1,\ldots,y^r)\in \IR^r$, let $\lfloor y/\ell \rfloor$ 
be the point $x$ in $\IZ^r$
such that $x^i = \lfloor y^i/\ell \rfloor$ (the quotient of the division
by $\ell$, componentwise). Clearly, $\ell x \le y < \ell (x+1)$, componentwise,
so $|y-\ell x|_\infty \le \ell$.
If $\zeta$ is a lattice animal containing $0$ we can find a sequence
$\pi=(\pi_0,\ldots,\pi_{2n})$ such that successive elements are either
identical or neighbors in $\IZ^r$ ($\pi$ is a path) and such that 
$\{\pi_0,\ldots,\pi_{2n}\}=\zeta$.
(To do this, consider a spanning tree of $\zeta$ and form $\pi$ by
traversing the tree ``from the bottom''.)
Then $|\pi_i-\pi_j|_\infty \le \ell$ if $|i-j| \le \ell$.
Define $u_i \in \IZ^r$ by
$u_i := \lfloor \pi_{i\ell}/\ell \rfloor$, $i=0,\ldots, h$.
Then $|u_i-u_{i-1}|_\infty \le \ell$ for all $i$. Furthermore, 
if $x \in \zeta$ then $x=\pi_t$ for some $0 \le t \le 2n$. 
Let $k=\lfloor t/\ell \rfloor$. Then 
$|\pi_t-\ell u_{k}|_\infty \le |\pi_t-\pi_{k\ell}|_\infty
+ |\pi_{k\ell}-\ell u_{k}|_\infty \le \ell+\ell$,
so $x=\pi_t \in \IB(\ell u_k,2\ell)$.
\end{proof}
From this it is immediate that for given $\ell$ there are at most 
$9^{r 2n/\ell}$ 
such $2\ell$--ball coverings.  
We use this result with $n= 2^k $ .
We consider $\ell$ of ``scale'' $2^i$ with $2^i \le  2^{qk}/k^2$.  
Let $i_0 $ be the maximal such value.  
For given $i$, we choose the value
$\ell = l(i)$ to equal the integer part of $\lambda^{-1/2r}\, 2^{i/q}$.  
With this value the probability that a $L^\infty$ ball 
of radius $2\ell$ contains a site having an $X^\lambda $ value is small 
for $\lambda $ small but not (in principle) negligible.  
From this it is easily seen that given a sequence $u_0, u_1,\ldots,u_h$ 
satisfying the above (and therefore given an $\ell$ covering), 
the probability that
\[
\text{the number of sites within the covering having value at least $2^i $ 
is at least $2(2^k/\ell) c_1 $}
\]
is bounded above by
$20^{-2 r \cdot 2^k/\ell}$ for $\lambda $ small.  
Thus we see that outside  an event of  probability $9^{hr} 20^{-2 \cdot 2^kr/\ell}$,  
this bound will hold for all $\ell$--coverings.  
Summing over $i$ such that $2^i  \le  2^{kq}/k^2$
we have that outside probability 
\[
\sum_ {2^i \le  2^{kq}/k^2}  (\frac12) {2 \cdot 2^kr/\ell(i)}  
\le 2 (\frac12)^{2 \cdot 2^kr/\ell(i_0)}  
\le 2 (\frac12)^{c(\lambda k^{2/q})} 
\]
for each such $i$ and for each corresponding $\ell(i)$--covering, 
the number of {sites  in the covering whose $X^\lambda$  value 
at least $2^i$} is at most $2(2^k/\ell)c_1$.

Thus (outside of probability $2 (\frac12)^{c(\lambda k^{2/q})}$ 
for some universal $c$) we have, for any lattice animal $\zeta$
of size $2^k$,  
\[
\sum_{x \in \zeta} X^\lambda(x) 
\I_{X^\lambda(x) \le \frac{2^{k(r+1+ \epsilon)/d}}{k^2} } 
\le \sum_{i \le i_0} 2(2^k/\ell(i))\, c_1 2^{i+1}   
\le \sum_{i \le i_0} 4\cdot 2^k \lambda^{1/2r} 2^{-i(1+\epsilon)/r} 
\]
which is bounded by 
$\text{Constant}(\epsilon) 2^k \lambda^{(1+\epsilon)/2r}$.  
The conclusion follows for large $k$.
Thus we have shown the proposition.
\end{proof}

\begin{cor} \label{corcube} 
Define
\[
B^{c_1}_u(x):=
\{\exists \text{ lattice animal $\zeta$ containing $x$,\, 
$|\zeta| \ge u$,\, $X^\lambda(\zeta) \ge c_1 |\zeta|$}\}.
\]
For $c_{1} < 1$ fixed, there exists a constant $\lambda_1 = \lambda_1 (c_1) $
and a function $H$ defined on $[0, \lambda_1 )$ tending to zero as $\lambda $
tends to zero, so that for all $0 < \lambda < \lambda_1$ and all 
positive integers $R$,
\[
\IP\left(\bigcup_{x \in [-R,R]^r} B^{c_1}_u(x)\right)  \le
\begin{cases}
\displaystyle \frac{H(\lambda )}{{(u+1)}^{1+ \epsilon}}, & u \ge R
\\
\displaystyle \frac{H(\lambda ) R^{r}}{{(u+1)}^{r +1+ \epsilon}}, & u \le R.
\end{cases}
\]
\end{cor}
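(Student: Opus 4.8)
The plan is to derive the Corollary from Proposition~\ref{propGLA} by two successive reductions: first from a lattice animal of arbitrary size to one whose size is a power of two anchored at a single site (this is essentially Remark~(i) following Proposition~\ref{propGLA}), and then, in the regime $u\le R$, from the union over all $\Theta(R^r)$ sites of the box to a union over a much coarser grid.

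\emph{Step 1: a single-origin estimate.} I would first check that for every $c>0$ and every $u\ge 1$,
\[
B^{c}_u(0)\ \subseteq\ \bigcup_{l:\ 2^l\ge u} A^{c/2}_l .
\]
Indeed, given a witness $\zeta\ni0$ with $n:=|\zeta|\ge u$ and $X^\lambda(\zeta)\ge c\,n$, let $l$ be the least integer with $2^l\ge n$ and enlarge $\zeta$ inside $\IZ^r$ to a connected $\zeta^+\ni 0$ with $|\zeta^+|=2^l$; since $X^\lambda\ge 0$ and $n>2^{l-1}$, we get $X^\lambda(\zeta^+)\ge c\,n>\tfrac c2\,2^l$, so $A^{c/2}_l$ occurs. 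Summing the bound of Proposition~\ref{propGLA} over $l$ with $2^l\ge u$ is a geometric series, and (using $u\ge1$ so that $u$ and $u+1$ are comparable) it yields, for $\lambda$ below the threshold $\lambda_0(c/2)$ furnished by the Proposition, an estimate $P(B^{c}_u(0))\le H_0(\lambda)\,(u+1)^{-(1+\epsilon)}$, where $H_0$ is the function $C(\cdot)$ of Proposition~\ref{propGLA} times an $r,\epsilon$–constant; in particular $H_0(\lambda)\to0$ as $\lambda\to0$.

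\emph{Step 2: localization by a short path.} The elementary fact I would use is: if $Q\subset\IZ^r$ is a box of side at most $u$ with a distinguished point $x_Q$, then $\bigcup_{x\in Q}B^{c_1}_u(x)\subseteq B^{c_1/(r+1)}_u(x_Q)$. Indeed, a witness $\zeta\ni x$ with $|\zeta|=n\ge u$ can be joined to $x_Q$ by a monotone path inside $Q$ of length at most $ru\le rn$; the resulting connected $\zeta'\ni x_Q$ has $|\zeta'|\le(r+1)n$ and, since $X^\lambda\ge0$, $X^\lambda(\zeta')\ge c_1 n\ge \tfrac{c_1}{r+1}|\zeta'|$, while $|\zeta'|\ge u$. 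Taking $Q=[-R,R]^r$ and $x_Q=0$ — legitimate exactly when $u\ge R$ — this reduces the first case of the Corollary directly to Step~1: $P\bigl(\bigcup_{x\in[-R,R]^r}B^{c_1}_u(x)\bigr)\le P\bigl(B^{c_1/(r+1)}_u(0)\bigr)\le H_0(\lambda)(u+1)^{-(1+\epsilon)}$, with $H_0$ now the Step~1 function for the constant $c_1/(2(r+1))$.

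\emph{Step 3: the case $u\le R$, and the main obstacle.} Here I would split a witness animal $\zeta$ according to its size. If $|\zeta|\ge R$, the event lies in $\bigcup_{x\in[-R,R]^r}B^{c_1}_R(x)$, hence (Step~2 with $R$ in place of $u$) in $B^{c_1/(r+1)}_R(0)$, whose probability Step~1 bounds by $H_0(\lambda)(R+1)^{-(1+\epsilon)}$; since $u\le R$ and $R\ge1$ one has $(u+1)^{r+1+\epsilon}\le 2^r R^r (R+1)^{1+\epsilon}$, so this is at most $2^r H_0(\lambda) R^r (u+1)^{-(r+1+\epsilon)}$. If instead $u\le|\zeta|<R$, then $\zeta$ is connected of diameter $<R$ and meets $[-R,R]^r$, hence $\zeta\subset[-2R,2R]^r$; cover $[-2R,2R]^r$ by at most $C_r(R/u)^r$ boxes of side $u$, observe that the box $Q$ containing the base point of $\zeta$ forces $B^{c_1/(r+1)}_u(x_Q)$ by Step~2, and apply a union bound over the $\le C_r(R/u)^r$ distinguished points, each contributing $\le H_0(\lambda)(u+1)^{-(1+\epsilon)}$ by Step~1. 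Since $u^r(u+1)^{1+\epsilon}\ge 2^{-r}(u+1)^{r+1+\epsilon}$ for $u\ge1$, this contributes $\le C_r H_0(\lambda)R^r(u+1)^{-(r+1+\epsilon)}$, and adding the two pieces and absorbing the $r,\epsilon$–constants into a single $H$ finishes the proof, with $\lambda_1:=\lambda_0\bigl(c_1/(2(r+1))\bigr)$. The one genuinely delicate point is this second sub-case: a naive union bound over all $\Theta(R^r)$ sites of the box would only give $\Theta\bigl(R^r(u+1)^{-(1+\epsilon)}\bigr)$, short of the claimed bound by the factor $(u+1)^r$; the gain comes precisely from coarsening the union to the $\Theta((R/u)^r)$ boxes of side $u$, which the short-path localization of Step~2 makes possible. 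One should also keep track that every use of Proposition~\ref{propGLA} invokes a single fixed positive constant (namely $c_1/(2(r+1))$), so that $\lambda_1$ and the vanishing function $H$ can be chosen uniformly.
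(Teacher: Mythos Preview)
Your argument is correct (modulo a harmless slip in Step~2: the cube $[-R,R]^r$ has side $2R$, not $R$, but since you take $x_Q=0$ as its \emph{center}, the path length from any $x$ to $x_Q$ is at most $rR\le ru\le rn$, so the constant $c_1/(r+1)$ is fine). It is, however, a genuinely different route from the paper's.

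The paper's proof is a one-line first-moment/averaging trick: fix $c_2<c_1$, let $N=\sum_{x\in[-R,R]^r}I_{B^{c_2}_u(x)}$, and bound $E(N)$ by the single-site estimate (your Step~1, which is exactly Remark~(i)). The key observation is the same ``short-path'' idea you use, but run in the opposite direction: if $B^{c_1}_u(x)$ holds for \emph{some} $x$, then attaching paths shows $B^{c_2}_u(y)$ holds for \emph{every} $y$ with $|y-x|_\infty\le \frac{c_1-c_2}{rc_2}\,u$, so $N\gtrsim u^r$ (or $R^r$ when $u\ge R$), and Markov's inequality converts the bound on $E(N)$ into the desired estimate, uniformly in both regimes.

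What each buys: the paper's averaging argument is shorter and handles the two cases $u\lessgtr R$ by a single computation, without needing to split on the size of the witness animal or to introduce a coarse grid. Your argument is more elementary and constructive --- it makes explicit that the ``correct'' scale of the union bound is a grid of mesh $u$ --- and it avoids the slightly opaque step of comparing $N$ to a deterministic lower bound. Both approaches rest on the identical geometric fact that a size-$n$ animal can absorb a path of length $O(n)$ at the cost of a fixed multiplicative loss in the constant $c_1$; you use it to collapse a box to one point, the paper uses it to spread one point over a box.
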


\begin{proof}
We treat the case $u\ge R $ only as that for $u \le R$ is essentially the same.
Fix $c_2 < c_1$. Let 
\[
N = \sum_{x \in [-R, R]^r} \I_{B^{c_2}_u(x)}.
\]
By Proposition \ref{propGLA} and Remark \ref{remarkremove}(i),
$\IP(B^{c_2}_u(x)) \le C(\lambda)/2^{k(1+\epsilon)}$ where
 $k$ is the largest integer with $2^k \le u$.
Therefore
\[
\IE(N) \le \frac{C(\lambda)(2R+1)^r}{2^{k(1+\epsilon)}}.
\]

Now suppose that event  $\bigcup_{x \in [-R,R]^r} B^{c_1}_u(x) $  occurs.  
Then for some $x \in [-R, R]^r$ and   some lattice animal $\xi$ containing $x$, 
$X^\lambda (\xi) \geq c_1 | \xi |$.  
Now for every $y \in  [-R, R]^r$ with $|y-x|_\infty\le \frac{R(c_1-c_2)}{c_2r}$, 
we can create a new lattice animal $\xi'$ 
containing both $y$ and $\xi$ by adding at most 
$(c_1-c_2)R/c_2$ points to $\xi$. Since we assumed $R \le u \le |\xi|$, we have
$|\xi| \leq |\xi'| \leq (c_1/c_2) |\xi|$.
By positivity of the random variables 
\[
X^\lambda (\xi') \geq  X^\lambda(\xi) \geq c_1 | \xi |  \geq  c_2 | \xi'|.
\]
Thus the event $\bigcup_{x \in [-R,R]^r} B^{c_1}_u(x) $ is a subset of the event that random variable $N$ defined at the start of the proof is at least
$(\frac{(c_1 -c_2)R}{rc_2})^r$.  Our result now follows from Markov's inequality.

\end{proof}

\section{Cluster formation and their properties}
In this section we construct clusters for our Poisson hail corresponding to
integer intervals $(m-1,m]$. The clusters themselves will follow a clustering
procedure of \cite{BF} and will depend only on the 
random variables $\{R_{x,m}\}_{x}$. 
Our departure will consist in the temporal (or workload) variable 
we associate to each cluster.
Our clusters will have the property that if $C \subset \IZ^d $ is a cluster and
$\gamma : (m-1,m] \to \IZ^d$ is a path satisfying property (ii) of 
Definition \ref{defgamma}, then
\begin{equation}
\label{clusterprop}
\gamma(m)  \in  C \Rightarrow \gamma(s) \in C \text{ for all } s \in (m-1,m].
\end{equation}

Recall we discretized time by identifying with $m$ all tasks for site $x$
arriving in $(m-1, m]$ with a single task of ``radius''
\[
R_{x,m} \equiv \sum R_i^{x,m},
\] 
summed over all tasks arriving at $x$ in time
interval $[m-1,m]$.
We denote by $t^{x,m}_i $ the times of the arrivals, i.e., the points of the 
Poisson process $N_x$
in the interval $(m-1, m]$. The indices $i$ are
coordinated so that for site $x$ a job arrives at time $t^{x,m}_i$ requiring
$\tau^{x,m}_i$ units of service from
servers in $x + [-R_{i}^{x,m} , R_{i}^{x,m}]^d$.

By Lemma \ref{basic}, 
\[
\IP(R_{x,m} \ge u) \le \frac{C}{(u+1)^{d+1+\epsilon}},
\]
for some $\epsilon >0$ and some finite constant $C= C( \epsilon)$ 
for any $ \epsilon $ conforming to the hypotheses of Theorem \ref{thm1}.

For fixed ``time'' $m$ and $y\in \IZ^d$ let 
\[
D_{y,m}:=\IB(y, R_{y,m})
\]
be the $L^\infty$ ball centred at $y$ and having radius $R_{y,m}$.
The cluster $C(x,m)$ containing $x$ is defined as the union of such $D_{y,m}$ 
over $y$ having the property that there exists integer $K$
and sites $y=z_0, \ldots, z_K=x$
such that $D_{z_i,m} \cap D_{z_{i-1},m} \not = \varnothing$,
for all $1\le i \le K$. 

Let $D(x,m) $ be the diameter of the cluster $C(x,m)$.
It is clear that these clusters have the property (\ref{clusterprop}) above.
What is not a priori clear is that even with very small rate $\lambda$ 
the clusters will be a.s.\ finite. 
However the preceding section enables us to prove
\begin{lemma} \label{lemDIAM}
Assume that $\IP(X(0)>t) \le C/t^{d+1+\epsilon}$.

Then there exists a function $K(\lambda)$ tending to zero as 
$\lambda$ tends to zero so that, for $\lambda$ sufficiently small and 
all positive integers $z$,
\[
\IP(D(0,m) \ge z)  \le \frac{K(\lambda)}{z^{1+\epsilon}}.
\]
\end{lemma}
\begin{proof}
Consider the GLA system with random variables $\{Z(x)\}_{x \in \IZ^d}$ for 
\[
Z(x)  =  R_{x,m} .
\]
If the diameter $D(0,m)$ of the cluster $C(0,m)$ containing the origin
exceeds $z$ then there must exist $L$ and
$0 = x_0, x_1, \ldots, x_L$ so that, for all $1 \le i \le L$,
\begin{equation}
\label{noted}
|x_{i-1} - x_i|_\infty \le R_{x_i,m} + R_{x_{i-1},m}
\end{equation}
and $|x_L| \ge z$.
We choose $\zeta$ to be the lattice animal 
$\bigcup_{i=1} ^ L P(x_{i-1}, x_i) $ where $P(x_{i-1}, x_i)$ is 
a path connecting $x_{i-1} $ and $x_i $ of length $|x_{i-1} - x_i |_1$.
(Recall that $|x|_1$ denotes the $L^1$ norm.)
Then $\zeta$ is a lattice animal in $\IZ^d$ containing the origin 
for which $\sum_{y \in \zeta} Z(y)  \ge \sum_{i=0}^L Z(x_i)$.
By \eqref{noted}, 
\[
\sum_{i=0}^L Z(x_i) 
\ge \frac12 \sum_{j=1}^L |x_{i-1} - x_i|_\infty 
\ge \frac{1}{2d} \sum_{j=1}^L |x_{i-1} - x_i|_1 \ = |\zeta  |/2d \ge z/2d.
\]

The result follows from
Proposition \ref{propGLA} applied to $c_1 < \frac{1}{4d}$.

\end{proof}

Arguing as in Corollary \ref{corcube}, we obtain 

\begin{cor} \label{corclustercube}
There is a function $C(\lambda)$ tending to zero as $\lambda \to 0$ so that
for $\lambda $ small, for all $L$, and for $R \le L/2$,
\[
\IP(\exists x \in [-R,R]^d \text{ with } D(x,m) \ge L)  \le  \frac{C(\lambda)}{L^{1+ \epsilon}}.
\]
while for $ \lambda $ small and $R \ge L/2$
\[
\IP(\exists x \in [-R,R]^d \text{ with } D(x,m) \ge L)  \le  \frac{C(\lambda) R ^ d }{L^{d + 1+ \epsilon}}.
\]
\end{cor}

We now consider the ``time'' $T(x,m)$ associated with
the cluster $C(x,m)$.
This definition is a little less direct than that for $D(x,n)$: 
Given $x \in \IZ^d$ and integer $m$ (and so given cluster $C(x,m)$), 
$T(x,m)$ is equal to the maximum value of
\[
\sum_{i=0}^L \tau_{j(i)}^{x_i,m}
\]
over sequences $x_0, x_1, \ldots, x_L  \in C(x,m)$ and 
$m \ge t_0 \ge t_1 \ge \cdots \ge t_L \ge {m-1}$ so that,
for all $i$ a job arrives at $x_i$ at time $t_i = t^{x,m}_{j(i)}$ having
work time $\tau^{x_i,m}_{j(i)}$ and
\[
|x_{i-1} -x_i|_\infty 
\le R^{x_i,m}_{j(i)} + R^{x_{i-1},m}_{j(i)}, \quad 1\le i \le L.
\]

We remark that, under the latter two conditions, if $x_0 \in C(x,m) $ then
necessarily the ``subsequent" $x_i$ are also in this cluster.
We note also that this definition (which requires more information than the
discretized data) ensures that, for any site in $C(x,m)$, 
the waiting time accrued during time interval $(m-1,m]$ is 
less than or equal to $T(x,m)$.

\begin{lemma} \label{lemTIME}
There exists function $K(\lambda)$ which tends to zero as $\lambda$ tends 
to zero so that for $\lambda$  sufficiently small for all $z \ge 1$,
\[
\IP( T(0,m) \ge z )  \le  \frac{K( \lambda )}{z^{1+ \epsilon }}.
\]
\end{lemma}

\begin{proof}

By the previous lemma we may suppose that $D(0,n) \le z/100$.
Again if $T(0,m)$ takes a value exceeding $z$ then there must exist $L$ and 
a  sequence
$x_0,x_1, \ldots, x_L \in C(0,m)$ and times
$m \ge t_0 \ge t_1 \ge \cdots \ge t_L \ge {m-1}$ 
so that, for all $i \le L-1$,
there is a job arrival at $x_i $ at time $t_i$ and
\[
|x_{i-1} - x_i |_{\infty} \le R^{x_i,m}_{j(i)} + R^{x_{i-1},m}_{j(i-1)},
\quad 1 \le i \le L,
\]
and also $\sum_i T(x_i,t_i ) \ge z$.
It is important to note that we do not assume that the $x_i$ are distinct.
Indeed it is for this reason that we use that bound involving 
$R^{x_i,m}_{j(i)}$ rather than $R_{x_i,m}$. 
However if $y $ is equal to 
$x_{i_1}, x_{i_2}, \ldots, x_{i_r}$, then of course 
$R_{y,m} \ge \sum_k R^{x_{i_k},m}_{j(i_k)}$ and
equally $T_{y,m} \ge \sum_k \tau^{x_{i_k},m}_{j(i_k)}$. Thus as before we
obtain, as in Lemma \ref{lemDIAM}, but with 
$Z(x) = R_{x,m} + T_{x,m}$,
that for a lattice animal 
$\zeta = \bigcup_i P(x_{i-1}, x_i)$ that the
GLA$(Z)$ score (i.e.\ $\sum_{x \in \zeta} (R_{x,m} + T_{x,m} ) $) will exceed $(z+|\zeta|)/4d$.

\end{proof}

Again we have 

\begin{cor} \label{cortimecube}
There exists function $C(\lambda)$ which tends to zero as $\lambda $ 
tends to zero so that
for  so that for all $L$ and for $R \le L/2$,
\[
\IP(\exists x \in [-R,R]^d \text{ with } T(x,m) \ge L)  \le  
\frac{C( \lambda)}{L^{1+ \epsilon }}.
\]
while for $ \lambda $ small and $R \ge L/2$
\[
\IP(\exists x \in [-R,R]^d \text{ with } T(x,m) \ge L)  \le  \frac{C(\lambda) R ^ d }{L^{d + 1+ \epsilon}}.
\]
\end{cor}

\section{Workload bounds and stability}
We now apply the foregoing to analyze the workload stability for small values of
$\lambda$. It is enough to show tightness of the workload $W^n(0,0)$
at time $0$ when the system starts empty at time $-n$. Recall that $W^n(0,0)$ is
obtained as the maximum of scores $V(\gamma)$ where $\gamma$ ranges in the
set of paths $\Gamma^n(0,0)$. See Definition \ref{defgamma}.

Due to the monotonicity properties of the system, 
$W^n(0,0)$ is readily seen to be bounded above by the
quantity $W^{n,D}(0,0)$ which corresponds to the discretized system 
and is given by
\[
W^{n,D}(0,0)  =  \sup_\gamma V^D(\gamma), 
\]
where the supremum is taken over discrete time indexed paths paths 
$\gamma : [-r,0] \rightarrow \IZ^d$ for some $0 \le r \le n$
satisfying\\
(i) $\gamma (0)  =  0$, 
\\
(ii) for each $-n < i \le 0$, $\gamma(i-1)$ 
belongs to cluster $C(\gamma(i), i)$.
\\
The score $V^D(\gamma)$ of $\gamma$ is given by
\[
V^D( \gamma )  =  \left( \sum_{i=0}^{r-1} T(\gamma(-i),-i)) \right) - r.
\]
We now consider a cube $H$ of length $R$ in $\IZ^{d+1}  =  \IZ^{d} \times
\IZ $ where the first $d$ coordinates are considered as ``spatial" and 
the last one temporal.
Accordingly, we write $H $ as $H^\prime \times I$ where $I$ is a
temporal interval of length $R$ and $H^\prime$ is a cube of 
length $R$ in $\IZ^d$. We define the variable 
\begin{multline*}
V(H,u):= \text{number of clusters $C(x,m)$ intersecting $H' \times \{m\}$}
\\
\text{and having $D(x,m)+T(x,m) \ge u$, $m \in I$.}
\end{multline*}
Clusters are by definition enclosed
in a slab $\IZ^d \times \{m\} $ for some $m$ and the clusters at different
temporal levels $m$ are independent. 
Thus (after repeated use of Lemma 2 of \cite{BF})
we easily obtain from Corollaries \ref{corclustercube} and \ref{cortimecube}.

\begin{prop} \label{James}
There exists constant $K_\lambda $ so that, for $u \le R$, $V(H,u) $ is
stochastically less than Poisson of parameter 
$\frac{K_\lambda R^{d+1}}{{(u+1)}^{d +1+ \epsilon}}$.
For $u \ge R$ it is bounded by a Poisson of parameter 
$ \frac{K_\lambda R}{{(u+1)}^{1+ \epsilon}} $.
Furthermore, as $\lambda $ tends to zero, $K_\lambda $ tends to zero.
\end{prop}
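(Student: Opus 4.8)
The plan is to use temporal independence to reduce the statement to a single slab, to relate the ``big clusters'' in a slab to greedy lattice animals of the field $Z(\cdot)=R_{\cdot,m}+T_{\cdot,m}$, and then to feed the animal estimates of Section~2 into Lemma~1 of \cite{BF}. Write $H=H'\times I$ with $I$ an integer interval of length $R$ and $H'\subset\IZ^d$ a cube of side $R$. Every cluster lies in a single slab $\IZ^d\times\{m\}$, and the data determining the slab-$m$ clusters (the variables $\{R_{x,m}\}_x$, and, for the ``time'' variables $T(\cdot,m)$, the restriction of the Poisson processes to $(m-1,m]$) are independent across $m$. Hence, if $V_m$ denotes the number of distinct slab-$m$ clusters $C(x,m)$ with $D(x,m)+T(x,m)\ge u$ meeting $H'$, then $V(H,u)=\sum_{m\in I}V_m$ is a sum of at most $R+1$ independent terms. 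Since stochastic domination is stable under independent sums and an independent sum of Poisson variables is again Poisson, it suffices to dominate each $V_m$ by a Poisson variable of parameter $K_\lambda/(u+1)^{1+\epsilon}$ when $u\ge R$, and of parameter $K_\lambda R^d/(u+1)^{d+1+\epsilon}$ when $u\le R$; I will take $K_\lambda$ proportional to the constants $C(\lambda)$ appearing in Proposition~\ref{propGLA} and Corollaries~\ref{corclustercube}, \ref{cortimecube}, so that $K_\lambda\to0$ as $\lambda\to0$ for free.

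For a fixed slab, a cluster with $D(x,m)+T(x,m)\ge u$ has $D(x,m)\ge u/2$ or $T(x,m)\ge u/2$, and in either case, exactly as in the proofs of Lemmas~\ref{lemDIAM} and \ref{lemTIME}, it contains a lattice animal $\zeta$ for the field $Z$ with $|\zeta|\ge u/2$ and $Z(\zeta)\ge c_1|\zeta|$ for a fixed $c_1<\tfrac{1}{4d}$; distinct clusters, being disjoint, give vertex-disjoint animals. I then work at spatial scale $u$: tile $H'$ by $O(1)$ cubes of side $\asymp u$ when $u\ge R$, and by $O((R/u)^d)$ such cubes when $u\le R$. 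For a single cube $Q$ of side $\asymp u$, the probability that some big slab-$m$ cluster meets $Q$ is at most $P\big(\bigcup_{x\in Q}B^{c_1}_{u/2}(x)\big)$, and by Corollary~\ref{corcube} (used with $r=d$, box half-side $\asymp u$ and threshold $\asymp u$) this is $\le K_\lambda/(u+1)^{1+\epsilon}$; the gain of the factor $u^{d}$ here over the crude union bound is precisely the density-extension step in the proof of that corollary, where attaching a short path turns a $c_1$-dense animal through $x$ into a $c_2$-dense animal through every $y$ within distance $\propto u$. Passing from this probability bound to a stochastic Poisson domination for the count of big clusters in (and near) $Q$, and then adding up over the $O(\max(1,(R/u)^d))$ scale-$u$ cubes tiling $H'$, is carried out by repeated use of Lemma~1 of \cite{BF}, whose role is exactly to decouple the contributions of well-separated cubes using the spatial independence of the i.i.d.\ slab field and of the per-slab Poisson process, thereby realising $V_m$ as dominated by an independent sum of small-probability Bernoulli contributions, one per cube. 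This yields $V_m\preceq\mathrm{Poisson}\big(K_\lambda/(u+1)^{1+\epsilon}\big)$ for $u\ge R$, and $V_m\preceq\mathrm{Poisson}\big(K_\lambda (R/u)^d/(u+1)^{1+\epsilon}\big)$, i.e.\ of parameter $\asymp K_\lambda R^{d}/(u+1)^{d+1+\epsilon}$, for $u\le R$.

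Assembling: summing the at most $R+1$ independent $V_m$'s gives $V(H,u)\preceq\mathrm{Poisson}\big(K_\lambda R/(u+1)^{1+\epsilon}\big)$ when $u\ge R$ and $V(H,u)\preceq\mathrm{Poisson}\big(K_\lambda R^{d+1}/(u+1)^{d+1+\epsilon}\big)$ when $u\le R$, which is the claim. The main obstacle is the middle step: the events ``the cluster through $x$ is big'' are spatially correlated, so neither a crude union bound (which loses the factor $u^d$ and only yields the exponent $1+\epsilon$) nor a bare first-moment bound produces a genuine stochastic Poisson domination; one must combine the density-extension trick (to recover the exponent $d+1+\epsilon$) with the decoupling lemma of \cite{BF} (to recover actual independence across the scale-$u$ cubes), and keep the bookkeeping tight enough that these two ingredients fit together.
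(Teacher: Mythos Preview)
Your proposal is correct and follows essentially the same approach as the paper. The paper's own proof is a one-line sketch: ``clusters at different temporal levels $m$ are independent; thus (after repeated use of Lemma~1 of \cite{BF}) we easily obtain from Corollaries~\ref{corcube} and \ref{cortimecube}\ldots'' --- and what you have written is precisely a fleshing-out of that sketch, with the temporal decomposition $V(H,u)=\sum_{m\in I}V_m$, the reduction of big clusters to large $Z$-animals via Lemmas~\ref{lemDIAM}--\ref{lemTIME}, the spatial tiling at scale $u$ feeding into Corollary~\ref{corcube} (which already contains both the $u\le R$ and $u\ge R$ regimes), and Lemma~1 of \cite{BF} to convert the per-cube probability bounds into genuine Poisson stochastic domination.
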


Given the above for $C$ a cluster corresponding to temporal interval $(m-1,m]$, 
we define a value $X_m(C)$ to signify the value
$D(x,m) + T(x,m)$ for a (and so any) $x $ in $C$.

In analyzing $V^D( \gamma ) $ we will consider a (nonstandard) 
lattice animal system on  $\IZ^{d+1}$.  
Instead of having i.i.d.\ random variables 
indexed by points $(x,m) \in \IZ^{d+1}$, we will consider 
a lattice animal model based on the random variables $X_m(C)$ which, 
while independent for  distinct collections of index $m$, are not independent.  
Given a lattice animal $\Xi \subset \IZ^{d+1}$, we write $\Xi_m$  
to denote $\Xi \cap \IZ^{d} \times \{m\}$ ( of course in general $\Xi_m$ will not be a lattice animal).  
Obviously given $\Xi$, all but finitely many $\Xi_m$ will be empty.  
The value $V^C(\Xi)$ associated with such a lattice animal $\Xi$ will be
\[
V^C(\Xi)   := \sum_{m\in \IZ}  \,
\sum_{C \text{ cluster in } \IZ^d \times\{m\}} X_m(C) \,
\I_{C\cap \Xi_m \ne \varnothing}.
\]
To analyze $\sup V^C(\Xi)$ over all lattice animals containing the origin 
of $\IZ^{d+1}$ and of cardinality $N$ we proceed as in Section \ref{VGLA}.
For a positive integer $k$ we let
\[
L_k := 2^{k(1+\epsilon /2(d+1))}.
\]
(We are primarily interested in $k$ with $L_k \le N/\log^2(N)$).
We know from Section \ref{VGLA} that there are less than
$K_d^{2N/L_k+1}$ collections of $L_k$ cubes in $\IZ^{d+1}$, 
each collection denoted as $\{C^k_1, C^k_2, \ldots, C^k_{2N/L_k + 1}\}$, 
so that each $\Xi$ considered is contained in the union of the $C^k_i$ 
for one of these collections.    
Given such a collection we have, by Proposition \ref{James}, that for any 
$j$, $V(C^k_j, 2^k)$ is stochastically less than a Poisson random variable 
of parameter $K_\lambda/2^{\epsilon / 2}$.  
Furthermore (again by Lemma 2 of \cite{BF}), we have that,
having identified all clusters intersecting $\bigcup_{i<j} C^k_i$, 
then conditional number of ``extra'' clusters intersecting $C^k_j$ 
is stochastically less than this Poisson random variable.   
Thus, just as in Section \ref{VGLA}, we obtain the following:
if $N_k(\Xi)$ is the number of clusters of value more than 
$2^k$ that intersect $\Xi$, then, for all $N/L_k \le \log^2(N)$ and $N$ large,
\[
\IP(\sup _\Xi N_k(\Xi) \ge 3K_\lambda N/L_k ) \le e^{-N/L_k}. 
\]
Thus for every lattice animal $\Xi$ of size $N$ containing the origin, 
outside of probability bounded by $\text{Const} \times e^{-\log^2(N)}$, 
the contribution to $V^C(A)$ from clusters $C$ having 
$X_n(C) $ less than 
$\left( \frac{N}{\log^2(N)} \right)^{\frac{1}{1+ \epsilon /2(d+1)}} = N_0$ 
is less than
\[
3 N K_\lambda  \sum _{2^k \le N_0} 2^k/2^{k(1+ \epsilon /2(d+1) )} 
\le N K_\lambda C( \epsilon),
\]
for some finite $C(\epsilon)$, where (we stress) $K _ \lambda $ tends to zero as $\lambda $ tends to zero.

Given this bound we easily deal with the clusters having value greater 
than $N_0$ using Proposition \ref{James} and the arguments of 
Section \ref{VGLA} and obtain
\begin{prop} 
\label{newb}
For the above system, for each $\delta>0$, there exists a sufficiently
small $\lambda >0$, such that the probability that 
there exists a lattice animal $\Xi$
of size at least $n$, containing the origin of $\IZ^{d+1}$ and with 
$V^C(\Xi) > \delta |\Xi|$ is less than 
$C_\delta / n^{\epsilon/2} $ for all positive integers $n$.
\end{prop}

We now apply this to the values $V^D(\gamma)$.
We denote by $\Gamma_m$ the set of discrete time paths 
$\gamma : [-m,0] \rightarrow \IZ^d$ 
satisfying the stipulated conditions: $\gamma(0) = 0$
and for all $0 \le i < m$, $\gamma(i-1) \in C(\gamma(i),i)$.  
It is immediate that if a curve (in continuous time, $\gamma[-m,0]
\rightarrow \IZ^d$, is in $\Gamma^m(0,0)$, then its ``skeleton'' 
$\gamma(-m), \gamma(-m+1) , \ldots, \gamma(0) = 0$ is in $\Gamma_m$.
\begin{prop} 
\label{propdef}
There exists $\lambda_0 >0$ and $C < \infty$ such that for all $n \geq 1$ and
for all $\lambda < \lambda_0$,
the probability that there exists an $m \ge n$ so that
$V^D(\gamma) > -m/2$, for some $\gamma \in \Gamma_m (0,0)$,  
is bounded by $C/n^{\epsilon/2}$.

\end{prop}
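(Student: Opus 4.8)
\emph{Proof plan.}
The statement is really a union over scales, so it suffices to show that for each $m$,
\[
P\big(\exists\,\gamma\in\Gamma_m:\ V^D(\gamma)>-m/2\big)\ \le\ \frac{H(\lambda)}{m^{1+\epsilon}},
\]
with $H(\lambda)\to 0$ as $\lambda\to 0$, and then to sum over $m\ge n$ (giving $C(\lambda)/n^{\epsilon}$, with $C(\lambda)\to 0$). Fix such a $\gamma$, defined on $[-r,0]$ with $r\ge m$. Since $V^D(\gamma)=\sum_{i=0}^r T(\gamma(-i),-i)-r$, the event $V^D(\gamma)>-m/2$ forces $\sum_{i=0}^r T(\gamma(-i),-i)>r-m/2\ge r/2$. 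Write $\mathcal C_i:=C(\gamma(-i),-i)$, $D_i:=D(\gamma(-i),-i)$, $T_i:=T(\gamma(-i),-i)$. By property (ii) of Definition \ref{defgamma} and the construction of the clusters, $\gamma(-i-1)\in\mathcal C_i$ and $|\gamma(-i)-\gamma(-i-1)|_\infty\le D_i$; since $\mathcal C_i$ is connected, we may route inside $\mathcal C_i$ a lattice path from $\gamma(-i)$ to $\gamma(-i-1)$ of length at most $d\,D_i$, and joining these by the unit temporal edges between consecutive slabs we obtain a lattice animal $\zeta_\gamma\subset\IZ^{d+1}$ containing the origin, with $m\le r+1\le|\zeta_\gamma|\le d\sum_i D_i+(r+1)$, and such that the only clusters meeting $\zeta_\gamma$ are the (distinct, slab-by-slab) clusters $\mathcal C_0,\dots,\mathcal C_r$. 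Hence its \emph{cluster value} $\mathrm{val}(\zeta_\gamma):=\sum_{\mathcal C\cap\zeta_\gamma\ne\emptyset}(D(\mathcal C)+T(\mathcal C))=\sum_{i=0}^r(D_i+T_i)$ satisfies $\mathrm{val}(\zeta_\gamma)\ge\sum_i D_i+r/2$. Writing $a=\sum_iD_i$, $b=r/2$, we get $\mathrm{val}(\zeta_\gamma)/|\zeta_\gamma|\ge (a+b)/(da+4b)\ge 1/\max(d,4)=:c_1$. Thus the event in the display above is contained in
\[
E_m:=\{\exists\text{ a lattice animal }\zeta\subset\IZ^{d+1}\text{ with }0\in\zeta,\ |\zeta|\ge m,\ \mathrm{val}(\zeta)\ge c_1|\zeta|\}.
\]

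It therefore suffices to prove the $\IZ^{d+1}$ analogue of Proposition \ref{propGLA} (equivalently of Corollary \ref{corcube} at $x=0$) for the functional $\mathrm{val}$: there are $\lambda_0>0$ and $H(\lambda)\to 0$ with $P(E_m)\le H(\lambda)/m^{1+\epsilon}$ for $\lambda<\lambda_0$. The plan is to repeat, with $r$ replaced by $d+1$, the four-part decomposition $A_{k,a},A_{k,b},A_{k,c},A_{k,d}$ of the proof of Proposition \ref{propGLA}, \emph{applied to clusters rather than to lattice sites}: one splits the clusters meeting $\zeta$ according to the dyadic size of $D(\mathcal C)+T(\mathcal C)$ into ``small'' ($\le 2^{kq}/k^2$, with $q=(1+\tfrac{1+\epsilon}{d+1})^{-1}$), ``medium'', and ``large'' scales, and shows that the contribution of each scale to $\mathrm{val}(\zeta)$ exceeds $c_1|\zeta|/10$ only with probability $o(1)\cdot 2^{-k(1+\epsilon)}$. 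The tail and counting inputs are exactly Proposition \ref{James}: the Poisson domination of the number of clusters of value $\ge u$ meeting a space-time cube of side $R$, with parameter $\propto R^{d+1}/(u+1)^{d+1+\epsilon}$ for $u\le R$ and $\propto R/(u+1)^{1+\epsilon}$ for $u\ge R$, plays precisely the role of the hypothesis \eqref{suppose}, and controls the medium-scale (binomial) and large-scale terms. For the small-scale term one applies the covering lemma (Lemma 2.1 of \cite{JM}) to cubes of $\IZ^{d+1}$ that \emph{enclose} the relevant clusters, so that clusters not being indexed by lattice points is immaterial; the self-referential question of how far the animal can spread is absorbed, just as in the proof of Proposition \ref{propGLA}, into a final bound of the form $\mathrm{const}(\epsilon)\,2^k\lambda^{(1+\epsilon)/2(d+1)}\ll c_1 2^k$. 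Summing over dyadic scales $2^k\ge m$ (as in Remark (i) after Proposition \ref{propGLA}, with the constant $c_1$ replaced by $c_1/2$ in the base events) yields $P(E_m)\le H(\lambda)/m^{1+\epsilon}$.

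Combining, $P(\exists\,\gamma\in\Gamma_m:V^D(\gamma)>-m/2)\le P(E_m)\le H(\lambda)/m^{1+\epsilon}$, and summing over $m\ge n$ gives the asserted bound with $C=C(\lambda)\to 0$. The only genuine obstacle is the middle step: verifying that Proposition \ref{James} is a faithful drop-in replacement for the i.i.d.\ heavy-tailed field in the multi-scale/covering argument of \cite{JM} --- i.e.\ that the covering can be run on enclosing space-time boxes, and that independence of clusters across distinct temporal slabs (used repeatedly, as in the derivation of Proposition \ref{James}) suffices to justify the binomial and Poisson estimates at each scale. The minor bookkeeping point of a path revisiting a spatial site at different times has already been dispatched: distinct temporal slabs carry distinct, independent clusters, and $D_i$ accounts for everything within a single slab, so no such revisit inflates $\mathrm{val}(\zeta_\gamma)/|\zeta_\gamma|$.
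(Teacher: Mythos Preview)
Your overall strategy --- map each admissible path $\gamma$ to a lattice animal in $\IZ^{d+1}$, score it by the sum of $(D+T)$ over the distinct clusters it meets, and then re-run the four-part GLA argument of Section~2 with Proposition~\ref{James} replacing the i.i.d.\ tail hypothesis --- is exactly the paper's approach. The reduction to a single-scale bound $H(\lambda)/m^{1+\epsilon}$ and the subsequent dyadic summation are also the same in spirit.

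There is, however, one genuine slip in your construction of $\zeta_\gamma$. You assert that because $\mathcal C_i$ is connected you may route \emph{inside} $\mathcal C_i$ a lattice path of length at most $d\,D_i$. This is false in general: a connected union of cubes can be long and thin (think of a spiral), so its internal $\ell^1$-diameter need not be bounded by $d$ times its $L^\infty$-diameter. Consequently the bound $|\zeta_\gamma|\le d\sum_i D_i+(r+1)$ is unjustified with your routing, and the ratio estimate $\mathrm{val}(\zeta_\gamma)/|\zeta_\gamma|\ge c_1$ does not follow.

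The fix is what the paper actually does: route by the $\ell^1$-geodesic in the slab $\IZ^d\times\{-i\}$ from $(\gamma(-i),-i)$ to $(\gamma(-i-1),-i)$, of length $|\gamma(-i)-\gamma(-i-1)|_1\le d\,D_i$. You then lose the claim that only the clusters $\mathcal C_0,\dots,\mathcal C_r$ are touched, but this is harmless: one still has $\mathrm{val}(\zeta_\gamma)\ge\sum_i(D_i+T_i)$ (the $\mathcal C_i$ are distinct and are among the clusters touched), and your ratio computation $(a+b)/(da+4b)\ge 1/\max(d,4)$ only used the lower bound on $\mathrm{val}$ and the upper bound on $|\zeta_\gamma|$, so it goes through unchanged. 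With this correction your argument is essentially identical to the paper's; the paper phrases the conclusion as a dichotomy on $|\zeta|$ (either $|\zeta|<(4d+1)m$, forcing score $<m/2$, or $|\zeta|\ge(4d+1)m$, forcing score $\ge(|\zeta|-m)/d>c_1|\zeta|$), but the content is the same.
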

\begin{proof}

We associate to each path $\gamma(-m), \gamma(-m+1), \ldots, \gamma(0)$ 
the score 
\[
\sum_{j=-m+1}^0 T(\gamma (j),j) + \sum_{j=-m+1}^0 |\gamma(j)-\gamma(j+1)|_1.
\]
Note that, by definition of $T(x,n)$, for each $j$, the sum of  durations 
for jobs  which arrive at time $s \in(j-1,j]$ and so that the job 
requires service from both server $\gamma(s)$ and $\gamma (s-)$ 
must be less than $T(\gamma(j),j) = T(\gamma(j-1),j)$.  
Thus in particular for a continuous time curve $\gamma \in \Gamma^m(0,0)$,
\[
V^D(\gamma) + m  \le  \sum_{j=-m+1}^0 T(\gamma (j), j)   
\le \sum_{j=-m+1}^0 T(\gamma (j),j) 
+ \sum^{j=0}_{-m+1} |\gamma(j)- \gamma(j+1)|_1.
\]
We can associate the path $\gamma $ with the ``lattice animal'' $\Xi^\gamma $
in $\IZ^{d+1}$ consisting of points
$(\gamma(i),i)$ for $i = -m,-m+1, \ldots 0$ 
together with for each $-m < i \le 0$
the points $(y,i)$ which lie on a path $P_i$ from
$(\gamma(i),i) $ to $(\gamma(i+1),i)$ which lies within $\IZ^d \times \{i\}$ 
and has length $|\gamma(i) - \gamma(i+1)|_1$. 
Thus this lattice animal $\zeta$ has size
\begin{multline*}
|\zeta|:= m+1  +  \sum_{i=-m+1}^0 (|\gamma(i) - \gamma(i-1)|_1-1)_+  
\\ \le  m +1
+ d \sum_{i=-m+1} ^ 0 D(\gamma(i) , i )  =  m +1
+ d \sum_{i=-m+1} ^ 0 D(\gamma(i-1) , i ).
\end{multline*}
We note that, while obviously $|\Xi^\gamma | \ge m+1$, there are no 
nonrandom upper bounds for the cardinality.
Thus the inequality above can be rewritten as 
\[
V^C(\Xi^\gamma)  \ge V^D(\gamma ) + m , \quad 
V^C(\Xi^ \gamma ) \ge \frac{1}{d} \sum_{j=-m+1} ^
{0} T(\gamma (j), j)  +  \sum^{j=0}_{-m+1} |\gamma(j)- \gamma(j+1) |_1.
\]
We now invoke Proposition \ref{newb} with $\delta = \frac{1}{20d} $ 
to deduce that (with $\lambda $ sufficiently small) the (bad) event 
\[
B = \{\exists \text{ lattice animal }  \Xi,\, 0 \in \Xi,\, |\Xi| \ge m,\, 
V^C(\Xi) \ge \delta |\Xi|\}
\]
has probability bounded by $C/m^{\epsilon/2} $ for some universal $C$.
Our analysis of now splits into two cases. In both cases we suppose that 
bad event $B$ does not occur.
\\
Firstly suppose that $|\Xi^\gamma| \le 4dm+m$.  In this case
\[
V^D(\gamma) \le \sum_{j=-m+1}^0 T(\gamma (j), j)  -  m.
\]
But, on event $B^c$, $\sum_{j=-m+1}^0 T(\gamma (j), j) 
\le V^C(\Xi^\gamma) \le \frac{4dm+m}{20d}$. This implies that 
\[
V^D(\gamma) \le -3m/4.
\]
On the other hand, suppose that $|\Xi^\gamma| > 4dm+m$.  Now we have 
\[
V^C(\Xi^\gamma ) )  \ge \sum^{j=0}_{-m+1} 
|\gamma(j)- \gamma(j+1)|_1 / d \ge |\Xi^\gamma|/2d.
\]
But this is impossible on event $B^c$.  
\\
Thus we have shown that, on event $B^c$, for all $m \ge n$, the event 
$A_m = \{ \exists \gamma \in \Gamma^m(0,0)\, V^D( \gamma ) > -2m/3\}$ 
does not occur, and so
$\IP(A_m) \le C/m^{\epsilon/2}$.
So 
$\IP(\bigcup_{\frac65^m\ge n} A_{\frac65^m})\le C^\prime/n^{\epsilon/2}$.
But it is easily seen that the event 
$\{\bigcup_{\frac65^m \ge n} A_{\frac65^m}\}$ contains the event 
$\bigcup_{m\ge n} \{\exists \gamma \in \Gamma^m(0,0)\, V^D(\gamma) \ge -m/2\}$.
\end{proof}

\begin{proof}[Proof of Theorem \ref{thm1}]
From Proposition \ref{propdef} we have that the workload $W^{n,D}(0,0)$
of the discretized system is tight as $n $ varies.
On the other hand, by monotonicity, the limit as 
$n \to \infty$ of $W^{n}(0,0)$ exists a.s.
Tightness ensures that this limit is finite. 
If we now start the system in full vacancy at time $0$
and consider the workload $W(x,n)$ for some $n >0$, 
we have that $W(x,n)$ is in distribution
equal to $W^n(0,0)$. 
Therefore $W(x,n)$ converges in distribution as $n \to \infty$.
\end{proof}

\begin{remark}
We have actually shown something stronger than tightness: namely, that, 
starting with an initially empty system, the workload profile at time $t$
converges in distribution, as $t \to \infty$, to some distribution 
which we will denote
by $\mu$. Standard arguments show that $\mu$ is an invariant measure:
if we start with $W(0, \cdot)$ distributed according to $\mu$ then
$W(t,\cdot)$ also has distribution $\mu$.
Since $W(t,\cdot)$ is translation invariant in space, this is the case 
for the limit $\mu$. 
We have thus proved the existence of an invariant probability measure which
is also spatially invariant. 
\end{remark}

\section{Necessity and proof of Theorem \ref{thm2}}
Let  $0 < \epsilon < d+1$. We consider the case where  the stone heights 
(job service times) $\tau$ satisfy, for $t$ positive integer,
\[
\IP(\tau \ge t)  =  \frac{1}{t^{d+1-\epsilon}},
\]
and the  stone basis $B$  is the cube
\[
B = [-\tau, \tau]^d .
\]
We consider the number of job arrivals in space time cube $[0, t)^{d+1}$ of
duration at least $2t$ for integer $t$: that is
the number of arrivals $(B, \tau)$ so that \\
(1) $\tau \ge 2t$,\\
(2) $B$ is a cube of side length $2 \tau + 1$ centred at a site in $[0,t)^d$
and\\
(3) the job arrives at a time in $[0,t)$.

This random variable has expectation $t^\epsilon \lambda / 2^{d+1- \epsilon} $
which for fixed $\lambda $ tends to infinity as $t$ becomes large.
In particular for $t$ large this expectation strictly exceeds $\frac12 $. We
fix such a $t$ now.

Obviously this applies to any translation of the cube and the random variables
associated to disjoint space time cubes are independent.

We consider the path $\gamma^n $ in $\Gamma^{nt}(0,0) $ which is identically
the origin for all $s \in [-tn, 0]$.
We note that under our assumptions on $(B, \tau)$ any job arriving at a site in
$[0,t)^d$ having $\tau \ge 2t$ requires service from the origin. Hence the path
$\gamma^n$ has value at least
\[
\sum_{j=1}^n 2t X_j - nt,
\]
where $X_j$ is the number of jobs arriving during interval $(-jt, -(j-1)t]$
and satisfying (1) and (2) above. By the law of large numbers, $V(\gamma^n )$
tends to infinity a.s., as $n$ tends to infinity.
This is enough to establish instability of the workload in this case no matter
what the value of $\lambda >0$ might be.

\begin{remark}
In fact this argument can easily be generalised to show that if 
for each arrving job, $\tau = R$ and if, for some $\epsilon > 0$,
$E( \tau ^{d+1- \epsilon } ) = \infty $, then the system does not have stability.
\end{remark}

\section{Uniqueness}
We now briefly address the question of unicity of invariant measures for the
workloads when the power law condition holds and when $\lambda$  is
sufficiently small.
We know that if condition \eqref{maincondition} is satisfied and 
parameter $\lambda $is sufficiently small 
then the distribution $\mu$ of workloads, obtained by
starting the system at time $-n$ with the workloads identically zero and letting
$n \rightarrow \infty$, is invariant. The question that naturally arises is
whether other equilibria for the workload, under Poisson arrival of jobs, are 
possible.

We consider systems that are stationary under spatial translations and show the
following.

\begin{decth} \label{unique}
Under the condition \eqref{maincondition} above, there exists $\lambda_0 $ 
so that if the arrival
rate $\lambda $ is less than $\lambda_0$, and $\nu $ is an invariant probability
for the system on the space of workloads that is preserved by spatial
translation, then $\nu  =  \mu$.
\end{decth}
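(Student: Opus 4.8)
The plan is to compare $\nu$ with $\mu$ through a monotone coupling and then to argue that the influence of the initial condition washes out.

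First I would realize $\nu$ as a time‑stationary process $\{W^\nu(t,\cdot)\}_{t\in\IR}$ driven by the marked Poisson data, and couple it with the ``from empty'' family: for each $n$ let $W^{n,D}(t,\cdot)$, $t\ge -n$, be the discretized workload started from full vacancy at time $-n$ and driven by the \emph{same} data. Since $0\le W^\nu(-n,\cdot)$, the monotonicity properties recalled after \eqref{evol} give $W^{n,D}(0,\cdot)\le W^\nu(0,\cdot)$ pointwise; letting $n\to\infty$, $W^{n,D}(0,0)\uparrow W^\mu(0,0)$ with $W^\mu(0,0)\sim\mu$, so in this coupling $W^\mu(0,\cdot)\le W^\nu(0,\cdot)$ a.s., and in particular $\mu$ is stochastically dominated by $\nu$. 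Since $\nu$ and $\mu$ are both spatially invariant and $\IZ^d$ is countable, it then suffices to prove $W^\nu(0,0)=W^\mu(0,0)$ a.s.

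For the reverse inequality, unroll the discretized dynamics from time $-T$ exactly as in Section 4:
\[
W^\nu(0,0)\ =\ \sup_{\gamma}\Bigl(V^{D}(\gamma)+W^\nu(-T,\gamma(-T))\Bigr)\ \ge\ W^{T,D}(0,0),
\]
the supremum being over discrete cluster‑paths $\gamma:\{-T,\dots,0\}\to\IZ^{d}$ with $\gamma(0)=0$ satisfying \eqref{clusterprop}, so it is enough to show the right‑hand supremum does not exceed $W^\mu(0,0)$ for $T$ large. By Proposition \ref{propdef} there is an a.s.\ finite random $\Sigma$ (with $P(\Sigma>s)\le C/s^{\epsilon}$) such that $W^{T,D}(0,0)=W^\mu(0,0)$ for all $T\ge\Sigma$; applied to the data on $[-T,0]$ alone (independent of $W^\nu(-T,\cdot)$), the same proposition gives that, outside probability $C/T^{\epsilon}$, every cluster‑path of temporal length $T$ ending at $0$ has $V^{D}(\gamma)\le -T/2$, while Corollaries \ref{corclustercube} and \ref{cortimecube} confine all such paths to $[-L,L]^{d}$ outside probability $\psi(T,L)$ that tends to $0$ for a suitable $L=L(T)\to\infty$ (growing faster than the typical reach $\sim T^{1/(1+\epsilon)}$). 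On the intersection of these events the right‑hand supremum is at most $\max_{y\in[-L,L]^{d}}W^\nu(-T,y)-T/2$, whence
\[
P\bigl(W^\nu(0,0)\ne W^\mu(0,0),\ T\ge\Sigma\bigr)\ \le\ \frac{C}{T^{\epsilon}}+\psi(T,L(T))+(2L(T)+1)^{d}\,P_\nu\bigl(W(0,0)>\tfrac{T}{2}\bigr),
\]
and taking $T=2^{k}$ and invoking Borel--Cantelli forces $W^\nu(0,0)=W^\mu(0,0)$ a.s., \emph{provided} the last term is summable over $k$.

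The main obstacle is therefore to establish a polynomial upper tail for $\nu$ itself --- an estimate $P_\nu(W(0,0)>z)\le C/z^{1+\epsilon'}$ (any exponent exceeding $d/(1+\epsilon)$ works, after adjusting $L(T)$) --- although $\nu$ is a priori an arbitrary spatially invariant equilibrium. I would obtain this by showing that the stationary system inherits the ``bounded dependence window'' of the from‑empty one: $W^\nu(0,0)$ is determined by the Poisson marks inside a space--time region delimited by the vacancy set of the $\nu$‑system, and that region is governed by precisely the greedy‑lattice‑animal and cluster estimates of Sections 2--3 (Proposition \ref{propGLA}, Lemmas \ref{lemDIAM}, \ref{lemTIME}, Proposition \ref{James}), which are statements about the marks and not about $\nu$; concretely, one first extracts a crude tail from tightness of $\nu$ and bootstraps, or shows directly that $W^\nu(0,0)$ is stochastically dominated by the (a.s.\ finite, polynomially tailed) value of the ``free'' last‑passage problem in which every path is stopped at its first backward vacancy. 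Alternatively the needed moment bound $E_\nu[W(0,0)^{p}]<\infty$, $p$ slightly above $1+d/(1+\epsilon)$, can be read off a rate‑conservation (Lyapunov) identity for the stationary workload compared with the corresponding finite moments of $\mu$. Once any such tail bound for $\nu$ is available, the Borel--Cantelli step closes the argument and yields $\nu=\mu$; the remaining steps are a routine repackaging of estimates already proved.
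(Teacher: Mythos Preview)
Your coupling framework is sound and, as you say, everything reduces to controlling
$P_\nu\bigl(\max_{|y|\le L(T)}W(0,y)>T/2\bigr)$ for an \emph{arbitrary} spatially invariant equilibrium $\nu$. This is indeed the heart of the matter, and it is precisely here that the proposal has a genuine gap: none of the mechanisms you list for producing a tail bound on $\nu$ actually works as stated. The GLA and cluster estimates of Sections~2--3 are statements about the Poisson marks; they say nothing about the vacancy set of the $\nu$-process, which is a function of $\nu$ itself, so the ``bounded dependence window'' and ``stop at first backward vacancy'' arguments are circular. Tightness gives only that tails vanish, with no rate, and there is no evident bootstrap that upgrades $o(1)$ to $o(z^{-d})$. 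A rate-conservation identity yields a first-moment balance, not the higher moments you need, and setting up a Lyapunov function on this infinite-dimensional state space that applies to \emph{every} invariant $\nu$ is itself the problem to be solved.

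The paper obtains the required control by a completely different device that you do not mention. One first reduces, via the ergodic decomposition, to $\nu$ that is spatially and temporally ergodic; the ergodic theorem then guarantees that for suitable $M$ the event $B^x_M=\{\,t^{-1}\!\int_0^t I_{W^x_s\le M}\,ds\ge 1-\epsilon\ \forall t>M\,\}$ holds at a $(1-2\epsilon)$-fraction of sites. The key new ingredient is a \emph{forward propagation} lemma (Proposition~\ref{propog}), proved from the irreducibility of the job distribution: if $W(0,0)>V$ then, with probability bounded below by a universal $c_1>0$, every site in a cube of side $\sim c_1V$ has workload $>V/4$ throughout a time interval of length $\sim V/4$. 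Thus one large value forces $B^x_M$ to fail on a cube of volume $\sim V^d$, contradicting the ergodic density estimate unless the initial large value was itself improbable. This yields directly that $P_\nu\bigl(\exists\,t>V,\ |x|\le Kt:\ W(x,0)>t\bigr)\to 0$ as $V\to\infty$ for fixed $K$, without any union bound over sites; from there the comparison with the from-empty process is, as the paper says, straightforward.

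Two smaller points. First, the reach of the discrete cluster paths of temporal length $T$ is \emph{linear} in $T$, not $T^{1/(1+\epsilon)}$: the proof of Proposition~\ref{propdef} shows that outside probability $C/T^{\epsilon}$ the associated lattice animal has size $O(T)$, so $L(T)=C'T$ is forced. With $L(T)\asymp T$ your union-bound route would require $P_\nu(W(0,0)>z)=o(z^{-d})$, which is strictly stronger than what the paper's propagation argument delivers (and stronger than what is needed once one argues as the paper does). Second, Borel--Cantelli is superfluous: the event $\{W^\nu(0,0)\ne W^\mu(0,0)\}$ does not depend on $T$, so it suffices that your upper bound tends to $0$ along some sequence $T\to\infty$.
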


In this section, the assumption that all jobs require service from cubes 
of servers is not ``without loss of generality'' 
so we remark that we only use the weak ``irreducibility''
condition that for every  neighbour $e$ of the origin
there exist  sequences
\[
0  = x_0, x_1, \ldots, x_r  = e
\]
and bases
\[
B_1, B_2, \ldots, B_r
\]
so that for all $i$,  $x_{i-1}, x_i \in B_i$ and jobs $B_i$ 
occur with strictly positive probability.

To show the claimed uniqueness
it suffices to show that for such a measure $\nu$ and any bounded
cylinder function $h$, we have
\[
\int h(\eta) \nu ( d \eta)  =  \int h(\eta) \mu ( d \eta).
\]
Assuming that $\nu$ is invariant
this is equivalent to 
\[
\IE^{\nu} [h(W_n)]  =  \int h(\eta) \mu ( d \eta)
\]
for any $n$ (and so, in particular, for $n$ large). 
Given this and our construction of the measure $\mu$,
it will be enough to show that for $\epsilon' > 0$ and $h$ as above, both fixed,
\[
\left|\IE^\nu[h(W_n)] - \IE^{\vec 0}[h(W_n)] \right| < \epsilon',
\]
for $n$ large.  This will be our objective in the following.

As $\nu$ is temporally invariant, we have, by the ergodic theorem
\cite{K} that, 
for every $M$, a.s.,
\[
\lim_{t \rightarrow \infty} \frac{1}{t} \int_0^t \I_{W^0_s \le M}\, ds  
=  \nu(W: W^0 \le M \vert \mathcal{I}_T ),
\]
where $\mathcal{I}_T$ is the $\sigma $-field of events 
that are invariant under temporal shifts.
Thus, for an $\epsilon > 0$ fixed, we can find an $M$ so large that,
\[
\frac{1}{t} \int_0^t \I_{W^0_s \le M}\, ds  \ge 1- \epsilon, \quad 
\text{ for all } t > M,
\]
with probability at least $1-\epsilon^2$.  
By (spatial) translation invariance we then have for (every) $x \in \IZ^d$
\[
\frac{1}{t} \int_0^t \I_{W^x_s \le M}\, ds  \ge 1- \epsilon, \quad 
\text{ for all } t > M,
\]
with probability at least $1-\epsilon^2$.
Let us call the above event $B^x_M$. Thus we will have, by the ergodic theorem 
applied to spatial shifts, that
\[
\lim_{k \rightarrow \infty}  \frac{1}{(2k+1)^d}\, \sum_{|x| \le k} \I_{B^x_M}  
=  \nu(B^0_M  \vert \mathcal{I}_S),
\] 
where $\mathcal{I}_S$ is the sigma field of spatially shift invariant events.
Thus we have that, for $k_0$ sufficiently large,
with probability at least $1- 2\epsilon$,
\[
\frac{1}{(2k+1)^d} \sum_{|x| \le k} \I_{B^x_M} \ge 1-\epsilon, \quad k \ge k_0.
\] 
We now note that at time $0$, say, the existence of a large workload $V$ at a
site $0,$ say, implies that with reasonable probability the workload will be of
order $V$ for a time of order $V$
in the time interval $[0,V]$ for a cube of sites of side length of order $V$.
\begin{prop} 
\label{propog}
There exists $c_1 \in (0, \infty)$ so that, for all $V$ large enough, uniformly
over initial workloads $W(0,\cdot)$ with $W(0,0)>V$,
with probability at least $c_1$, we have, for all $x \in [-c_1V,c_1V]^d$,
\[
\quad W(x,t) > V/4, \text{ for all } t \in [V/2, 3V/4].
\]
\end{prop}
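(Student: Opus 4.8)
The plan is to show that the large workload at the origin ``infects'' a linearly growing region and that the block it creates then spreads like a supercritical oriented percolation. Two deterministic observations organise the argument. First, by \eqref{evol} the map $t\mapsto W(x,t)$ decreases at rate at most $1$ and only ever jumps upwards, so $W(0,t)\ge W(0,0)-t>V-t$ for all $t\ge 0$, and $W(x,t)\ge W(x,s)-(t-s)$ whenever $s\le t$. Second, our construction uses only the information $W(0,0)>V$, so the ``uniformly over initial workloads'' in the statement is automatic (and in any case the dynamics are monotone in the initial data). Call $x$ \emph{infected by time} $s$ if $W(x,t)\ge W(0,0)-t$ for every $t\ge s$. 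The origin is infected by time $0$, the property is monotone in $s$, and once a site is infected it stays infected. It propagates: if $y$ is infected by time $s$ and at a time $\sigma\ge s$ a job with base $B\ni\{y,x\}$ arrives, then \eqref{evol} gives $W(x,\sigma+)=\max_{w\in B}W(\sigma-,w)+\tau\ge W(y,\sigma-)\ge W(0,0)-\sigma$, so $x$ is infected by time $\sigma$; chaining this through a finite sequence of jobs one sees that the bound $W(z_i,\sigma_i+)\ge W(0,0)-\sigma_i$ is preserved along the chain with no loss.

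Using the weak irreducibility hypothesis I would next prove a one-step estimate. Split time into slots $J_k=[kt_0,(k+1)t_0]$. For each neighbour $e$ of the origin, irreducibility supplies a bounded path $0=z_0,\dots,z_r=e$ and bases $B_1,\dots,B_r$ with $z_{i-1},z_i\in B_i$ and with jobs of base $B_i$ of positive probability; the corresponding sequence of job \emph{arrivals} can be forced to occur, in order, inside a single slot with probability at least some $a=a(\lambda)>0$ (service times play no role, the $+\tau$ in \eqref{evol} only helping). Thus there are $t_0<\infty$, $\rho_0<\infty$ and $a>0$, uniform in $e$ and under translations, such that: conditionally on the past, if $y$ is infected by time $kt_0$ then with probability at least $a$ the site $y+e$ is infected by time $(k+1)t_0$, and this event is measurable with respect to the Poisson points in $(y+[-\rho_0,\rho_0]^d)\times J_k$ only.

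Consequently the infected set, read on the slotted time scale, stochastically dominates the wet set of an oriented percolation on $\IZ^d\times\IZ$, up to two defects: the one-step events of distinct sites in a common slot have overlapping (but bounded) supports, and $a$ need not be near $1$. Both are removed by a standard renormalisation: coarse-grain $\IZ^d\times\IZ$ into large space--time blocks, call a block \emph{good} when the Poisson points in a bounded enlargement of it force the infection to cross the block and to be handed on to the neighbouring blocks --- achievable with probability arbitrarily close to $1$ by running many independent one-step attempts inside the block --- and observe that goodness depends only on a bounded region, so by a domination theorem for finite-range dependent fields the good blocks dominate a high-density Bernoulli field and percolate in the oriented sense. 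Hence the infected region dominates the wet region of a supercritical oriented percolation seeded at the block of the origin, and on the survival event, which has probability $\rho>0$ independent of $V$, that wet region eventually contains $[-\alpha n,\alpha n]^d$ at level $n$ for some deterministic $\alpha>0$. Choosing $c_1\in(0,1)$ small enough that a box of side $2c_1V$ is covered by real time $V/2$ at the percolation speed and that $c_1\le\rho/2$, one gets for all large $V$ that, with probability at least $c_1$, every $x\in[-c_1V,c_1V]^d$ is infected by time $V/2$; the deterministic decay bound then yields $W(x,t)\ge W(0,0)-t>V-t\ge V/4$ for $t\in[V/2,3V/4]$.

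The main obstacle is this renormalisation step: one must choose the block dimensions and the precise notion of a good block so that goodness is genuinely local, a good block with wet parents reliably transmits the infection even when $a$ is small, and the resulting comparison is with a truly supercritical oriented percolation (so that its wet set has positive speed). This is a routine but fiddly block construction of the kind standard for the contact process; alternatively one could phrase the spreading as first-passage percolation with finite-range dependent weights and invoke the corresponding shape theorem. Everything else is bookkeeping with the rate-$1$ decay of $W$ and the one-step estimate.
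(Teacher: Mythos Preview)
Your deterministic observations and the ``infected by time $s$'' device are exactly right and in fact make precise what the paper's proof leaves implicit, but from that point the paper takes a much shorter route than your oriented-percolation comparison. For each fixed $x\in[-c_1V,c_1V]^d$, irreducibility supplies a specific chain of bases $B_1,\ldots,B_R$ linking $0$ to $x$ with $R\le dR_1|x|_\infty\le dR_1c_1V$, each base arriving at rate at least some $c>0$; the infection advances one link at each such arrival, so the time to reach $x$ is stochastically dominated by a sum of $R$ independent $\mathrm{Exp}(c)$ variables, and hence $P(x\text{ not infected by }V/2)\le P(\text{Poisson}(cV/2)<R)\le e^{-aV}$ once $c_1$ is chosen with $dR_1c_1<c/2$. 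A union bound over the $O(V^d)$ sites in the cube then gives the desired event with probability tending to $1$ as $V\to\infty$. This direct large-deviation estimate for a single deterministic path, followed by a union bound, bypasses entirely the block renormalisation and finite-range domination you flag as the main obstacle; your percolation route is valid and would be the natural tool if one needed to control the full geometry of the infected set or if the one-step success probability did not grow with the available time, but here it is more machinery than the problem requires.
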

\begin{proof}
From our ``irreducibility" assumptions on the distribution of jobs, it is clear
that there exist for each neighbour $e$ of the origin $0$ a sequence of jobs
with bases $B_1,B_2, \ldots, B_R$ so that, for each $i$, 
$B_i \cap B_{i+1} \ne \varnothing$, $0 \in B_1$ and $e \in B_R$ and
the rate at which job with base $B_i$ arrives is strictly positive.
Taking $R_1$ to be the maximum over the $R$s as the neighbour $e$ 
varies and $c$ to
be the minimum over the rates $B_i$ as $e$ and $i$ vary, we obtain that,
for any $x$, there exists a
``path" $B_1,B_2, \ldots, B_R$ so that $R \le d R_1|x|_\infty$, 

for each $i$, $B_i \cap B_{i+1} \ne \varnothing$, 
$0 \in B_1$ and $x \in B_R$ and
the rate at which job $B_i$ arrives is at least $c$. Thus for every $x \in
[-c_1V,c_1V]^d $, the
probability that $W(x,V/2) \le V/2$ is bounded by the probability that a
parameter $Vc/2$ Poisson process is less than $dc_1VR_1$. 
The result now follows easily from Poisson tail probabilities.
\end{proof}

We note that if $V > 4M$ (assuming as we may that $\epsilon < 1/3$) then
$\quad W(x,t) > V/4$, for $t \in [V/2, 3V/4]$, implies that event $B^x_M $
does not occur.
This implies that 
\begin{prop} \label{lln}
If for some $x$ with $|x| \le KV$ we have $W(x,0) \ge V >4M$, then with
probability at least $c_1$,
\[
\frac{1}{(2Kn+1)^d} \sum_{|y|\le KN}\I_{B^y_M} \le 1- c_1/(2K)^d < 1-\epsilon,
\]
for $\epsilon$ fixed  small enough.
\end{prop}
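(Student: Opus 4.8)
The plan is to establish the inequality on a single event of probability at least $c_1$, namely the event furnished by Proposition~\ref{propog}, by turning its conclusion into a lower bound on the number of sites $y$ for which $I_{B^y_M}=0$. Since the job arrival process and the workload dynamics are invariant under spatial translations, Proposition~\ref{propog} applies verbatim with the distinguished site $0$ replaced by our site $x$: because $W(x,0)\ge V$, there is an event $G$ with $P(G)\ge c_1$ on which $W(y,t)>V/4$ for every $y\in x+[-c_1V,c_1V]^d$ and every $t\in[V/2,3V/4]$. The location of $x$ in $\IZ^d$ is immaterial at this stage.

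Next I would invoke the observation made just before the statement: on $G$, and provided $V>4M$ and $\epsilon<1/3$, every such $y$ satisfies $I_{B^y_M}=0$. Indeed $3V/4>M$, so $t=3V/4$ is an admissible time in the definition of $B^y_M$, while $W^y_s>V/4>M$ throughout $[V/2,3V/4]$ forces
\[
\frac{1}{3V/4}\int_0^{3V/4} I_{W^y_s\le M}\,ds\ \le\ \frac{V/2}{3V/4}\ =\ \frac23\ <\ 1-\epsilon ,
\]
so the defining condition of $B^y_M$ fails.

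It remains to count how many of these sites lie in $\{\,y:|y|\le KV\,\}$, and this is the only place the hypothesis $|x|\le KV$ enters. In each coordinate the interval $[x_i-c_1V,\,x_i+c_1V]$ meets $[-KV,KV]$ in an interval of length at least $c_1V$ (the extreme case being $x_i=\pm KV$; here we may shrink $c_1$ at the outset so that $c_1\le 2K$), so the cube $x+[-c_1V,c_1V]^d$ contains at least $(c_1V-1)^d$ lattice points $y$ with $|y|\le KV$, and for $V$ large enough — which we may assume, as $M$, hence $V>4M$, is taken large — this exceeds $\tfrac12(c_1V)^d$. Since the number of $y$ with $|y|\le KV$ is $(2KV+1)^d$, on $G$ a fraction at least $c':=\tfrac12\bigl(c_1/(2K)\bigr)^d$ of those sites have $I_{B^y_M}=0$, whence
\[
\frac{1}{(2KV+1)^d}\sum_{|y|\le KV} I_{B^y_M}\ \le\ 1-c' .
\]
As $c'$ depends only on $c_1,K,d$, having fixed $\epsilon<c'/2$ (and $\epsilon<1/3$) at the outset makes the right-hand side $<1-2\epsilon$; this is the asserted bound, with $c'$ in the role of the constant written $c_1/(2K)^d$ in the statement (up to the customary dimension-dependent abuse, and reading the indices ``$n$'', ``$N$'' there as the workload scale $V$).

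I do not anticipate a serious difficulty, since everything analytically substantial is already contained in Proposition~\ref{propog}. The points requiring care are: (a) the appeal to translation invariance to re-center Proposition~\ref{propog} at $x$ — where, note, the hypothesis $|x|\le KV$ is \emph{not} needed, that hypothesis being used only in the counting step to keep a fixed positive fraction of the good cube around $x$ inside $[-KV,KV]^d$; (b) keeping the two roles of the symbol $c_1$ separate (the size/probability constant coming out of Proposition~\ref{propog} versus the fraction $c'$ in the conclusion); and (c) the harmless integer rounding in the counting, absorbed by taking $V$ large through a large $M$.
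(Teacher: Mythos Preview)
Your proposal is correct and follows exactly the route the paper intends: the paper's only ``proof'' is the sentence immediately preceding the proposition, namely that on the event of Proposition~\ref{propog} (recentred at $x$) every site in the translated $c_1V$-cube has $B^y_M$ failing, and then the inequality is a counting of how many such sites lie in $[-KV,KV]^d$. You have simply filled in the counting details and the intersection argument that the paper leaves implicit, and you correctly flag that the constant written $c_1/(2K)^d$ in the statement should really be of the form $(c_1/(2K))^d$ up to a harmless factor, and that the indices $n,N$ are typos for $V$.
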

This yields the simple corollary
\begin{cor} 
For $M$ and $\epsilon$ as above, let $A(V,K) $ be the event that
$W(x,0) > t$ for some $t > V$ and some $|x| \le Kt$.
Then, under measure $\nu$, the probability that $A(V,K)$ occurs is less 
than $\epsilon/c_1$ provided $c_1/(2K)^d > \epsilon$.
\end{cor}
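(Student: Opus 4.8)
The plan is to deduce the Corollary directly from Proposition \ref{lln} together with the ergodic bound recorded above: a configuration lying in $A(V,K)$ must carry, somewhere within distance $Kt$ of the origin, a workload exceeding some $t>V>4M$; Proposition \ref{lln} then says that with probability at least $c_1$ this depresses the spatial density of the ``good'' sites $B^y_M$ below $1-2\epsilon$ on the scale of that workload, which contradicts the ergodic statement that holds $\nu$-a.s.; integrating out the conditioning on the time-$0$ configuration produces the bound.

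First I would fix $M$ (enlarging it from the start if needed) so that the event
\[
\mathcal{G}\ :=\ \Big\{\,\tfrac{1}{(2k+1)^d}\sum_{|x|\le k} I_{B^x_M}\,>\,1-2\epsilon\ \text{ for every }k\ge 1\,\Big\}
\]
has $\nu(\mathcal{G})\ge 1-\epsilon$; that such an $M$ exists is exactly the ergodic statement above, a spatial maximal ergodic inequality allowing one to pass from $\nu(B^0_M)\to 1$ to $\nu(\mathcal{G}^c)\le\epsilon$ while keeping $\epsilon$ fixed. Next I would observe that $\{W(0,\cdot)\in A(V,K)\}$ is a countable union over $x\in\IZ^d$ of the events $\{W(x,0)>V\ \text{and}\ |x|<K\,W(x,0)\}$ — for fixed $x$ there is a $t>V$ with $|x|\le Kt$ and $W(x,0)>t$ precisely when $W(x,0)>V$ and $|x|<K\,W(x,0)$ — so it is measurable and depends only on the configuration at time $0$.

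The heart of the matter is a conditioning step. Suppose $W(0,\cdot)\in A(V,K)$ and $V>4M$; choose $x$ with $W(x,0)>V$ and $|x|<K\,W(x,0)$, and put $V':=W(x,0)$. Then $|x|\le KV'$ and $W(x,0)\ge V'>4M$, so Proposition \ref{lln} applies with $V$ replaced by $V'$: conditionally on $W(0,\cdot)$, with conditional probability at least $c_1$ one has $\tfrac{1}{(2\lfloor KV'\rfloor+1)^d}\sum_{|y|\le\lfloor KV'\rfloor} I_{B^y_M}\le 1-c_1/(2K)^d$. Since $c_1/(2K)^d>2\epsilon$ by hypothesis, this average is strictly below $1-2\epsilon$, so $\mathcal{G}$ must fail (rounding $KV'$ down to an integer costs only the same negligible adjustment of constants already present in Proposition \ref{lln}). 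Hence on $\{W(0,\cdot)\in A(V,K)\}$ the conditional probability of $\mathcal{G}^c$ given $W(0,\cdot)$ is at least $c_1$.

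Integrating over $\{W(0,\cdot)\in A(V,K)\}$ and invoking the tower property gives $\nu(\mathcal{G}^c)\ge c_1\,\nu(W(0,\cdot)\in A(V,K))$, whence $\nu(W(0,\cdot)\in A(V,K))\le\nu(\mathcal{G}^c)/c_1\le\epsilon/c_1$, as claimed. I expect the only genuinely delicate point to be the conditioning: the indicators $I_{B^y_M}$ are functionals of the evolution after time $0$, so Proposition \ref{lln} has to be read as a statement about the conditional law given $W(0,\cdot)$, and one must check that the scale $KV'$ it produces is among the $k$ constrained by $\mathcal{G}$ (it is, since $V'>V$ can be made as large as we please). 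The remaining items — the per-site rewriting of $A(V,K)$, the integer rounding, and the precise numerical constant in front of $\epsilon$ — are routine.
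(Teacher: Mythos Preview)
Your argument is correct and is precisely the deduction the paper has in mind: the paper offers no explicit proof, merely stating that Proposition \ref{lln} ``yields the simple corollary,'' and your conditioning/tower-property computation from $\nu(\mathcal{G}^c)\ge c_1\,\nu(A(V,K))$ is the intended one-line derivation. The only cosmetic point is the constant: the paper records $\nu(\mathcal{G})\ge 1-2\epsilon$ rather than $1-\epsilon$, so strictly one obtains $2\epsilon/c_1$, but as you note this is immaterial since $\epsilon$ (and $M$) were chosen freely.
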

From this result our claim is straightforward.


\begin{figure}[b]
\begin{center}
\epsfig{file=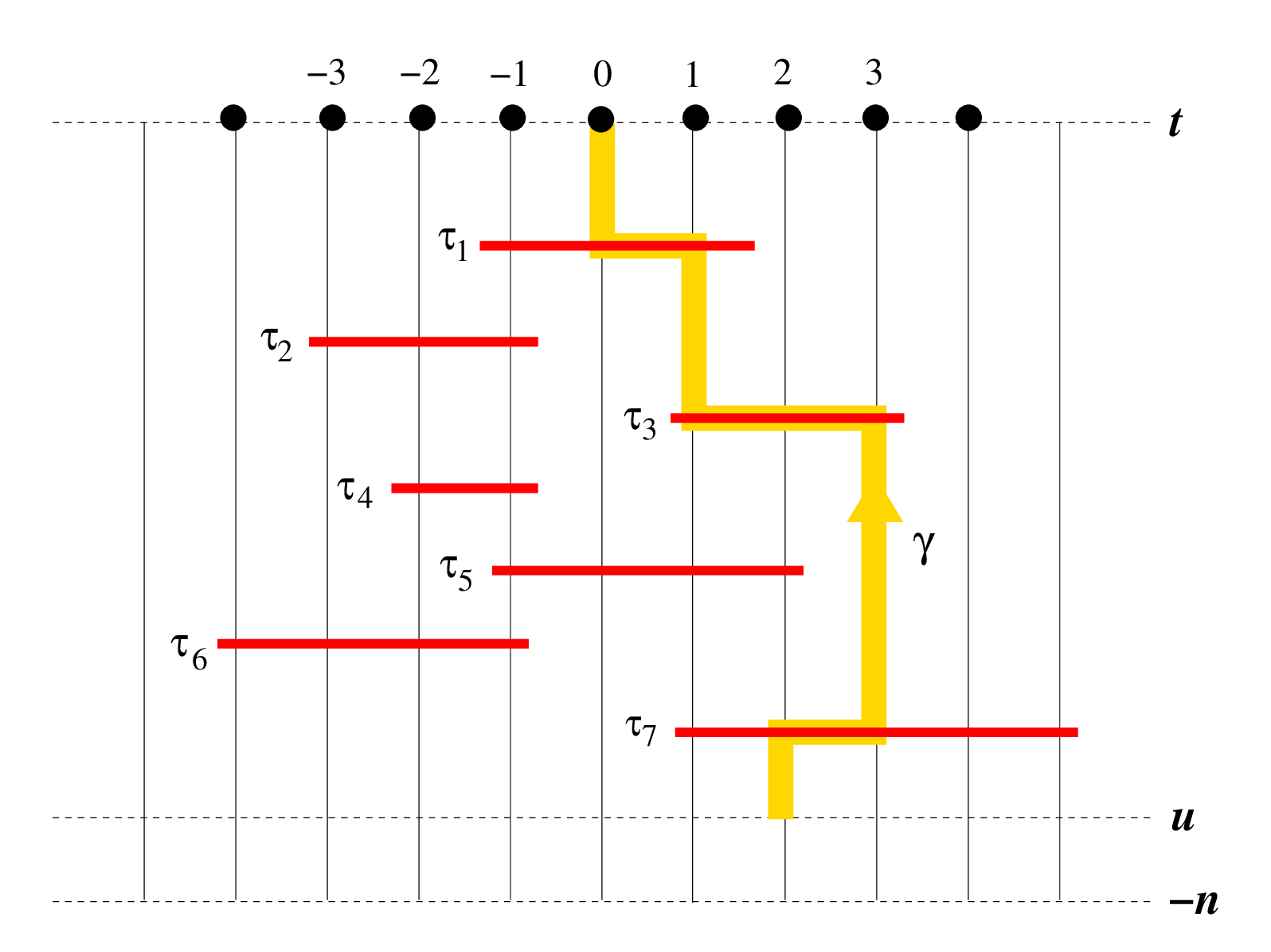,width=13cm}
\caption{\small \em Graphical representation of the part of
the process responsible for the computation of the profile $W(t,\cdot)$
when the system starts with $W(-n,\cdot)\equiv 0$. Consider the
evaluation of $W(t,0)$ at site $x=0$. Horizontal intervals
represent hailstone (job) arrivals with heights $\tau_i$.
Only those arrivals which can potentially influence
$W(t,0)$ are shown. Consider a path $\gamma$ as indicated,
from $(u,2)$ to $(t,0)$.
Its score is $V(\gamma)= \tau_1+\tau_3+\tau_7-(t-u)$.
$W(t,0)$ is the maximum of these scores over all such paths starting
from some $(u,y)$ and ending at $(t,0)$.}
\label{graphicalrep}
\end{center}
\end{figure}

\end{document}